\theoremstyle{plain}
\newtheorem{theorem}{Theorem}[section]
\newtheorem{lemma}[theorem]{Lemma}
\newtheorem{proposition}[theorem]{Proposition}
\theoremstyle{definition}
\newtheorem{remark}[theorem]{Remark}
\newtheorem{definition}[theorem]{Definition}
\newcommand{\E}{\mathrm{e}}
\newcommand {\R} {\ensuremath{\mathbb{R}}}
\newcommand {\ZZ} {\ensuremath{\mathbb{Z}}}
\newcommand {\N} {\ensuremath{\mathbb{N}}}
\newcommand {\CC} {\ensuremath{\mathbb{C}}}
\newcommand{\process}[1]{\{#1_t\}_{t\geq0}}
\newcommand{\D}{\mathrm{d}}
\newcommand{\Ind}{\mathds{1}}
\numberwithin{equation}{section}
\title[Periodic Homogenization of  a
L\'evy-type Process
with Small Jumps]{Periodic Homogenization of  a
L\'evy-type Process
with Small Jumps}
\author[N.\ Sandri\'{c}]{Nikola Sandri\'{c}}
\address[Nikola\ Sandri\'{c}]{Department of Mathematics\\University of Zagreb\\ Zagreb\\Croatia}
\email{nsandric@math.hr}
\author[I.\ Valenti\'{c}]{Ivana Valenti\'{c}}
\address[Ivana\ Valenti\'{c}]{Department of Mathematics\\University of Zagreb\\ Zagreb\\Croatia}
\email{ivana.valentic@math.hr}
\author[J.\ Wang]{Jian Wang}
\address[Jian\ Wang]{College of Mathematics and Informatics  \&
Fujian Key Laboratory of Mathematical Analysis and Applications (FJKLMAA)  \& Center for Applied Mathematics of Fujian Province (FJNU)\\ Fujian Normal University\\ Fuzhou\\ PR China}
\email{jianwang@fjnu.edu.cn}
\subjclass[2010]{35S15, 47G20, 60F17,  60J75}
\keywords{Feller process,  homogenization, L\'evy-type process, pseudo-differential operator, semimartingale characteristics}
\begin{document}
\allowdisplaybreaks[4]

\begin{abstract}
    In this article, we consider the problem of periodic homogenization of  a    Feller process generated by a
    pseudo-differential operator, the so-called L\'evy-type process.
    Under the assumptions that the  generator  has     rapidly periodically oscillating   coefficients, and that it admits  ``small  jumps'' only (that is, the jump kernel has  finite second moment), we prove that the appropriately centered and scaled process converges weakly to a Brownian motion with covariance matrix given in terms of the coefficients of the generator.
    The presented results generalize the  classical and well-known results related to
    periodic homogenization of a  diffusion process.

\end{abstract}

\maketitle

\section{Introduction}\label{S1}

 The classical reaction-diffusion  equation
\begin{equation*}\partial_t p(t,x)\,=\,\langle b(x),\nabla_xp(t,x)\rangle +
\frac{1}{2}{\rm Tr}\,c(x)\nabla_x^{2}p(t,x)
+r\bigl(p(t,x)\bigr)\end{equation*}  describes the evolution of population density due to random displacement of individuals (diffusion term), movement of individuals within the environment (drift term), and their reproduction (reaction term).
In order to characterize long-range effects the diffusion and drift terms  are
naturally replaced by
 an integro-differential operator  of the following form \begin{equation}\begin{aligned}\label{IDO} \mathcal{L}f(x)\,=\, & \langle b(x),\nabla f(x)\rangle+
	\frac{1}{2}{\rm Tr}\,c(x)\nabla^{2}f(x)
	\\&+\int_{\R^{d}}
\left(f(x+y)-f(x)-\langle y,\nabla f(x)\rangle \Ind_{B_1(0)}(y)\right)\,\nu(x,\D y)\,,\end{aligned}\end{equation}
where $\nu(x,\D y)$ is
a non-negative Borel kernel  which describes these effects, that is, it  quantifies the property that an individual at  $x$ jumps to  $x+\D y$.

The main goal of this article is to discuss periodic homogenization of the operator $\mathcal{L}$, with kernel $\nu(x,\D y)$ admitting ``small jumps'' only (that is, having finite second moment). Our approach is
 based on probabilistic techniques. More precisely, we discuss periodic  homogenization of the stochastic (Markov) process $\process{X}$ in
periodic medium, generated by $\mathcal{L}$.  We focus to the case when $\process{X}$ is a so-called L\'evy-type process or, equivalently, when $\mathcal{L}$ is a pseudo-differential operator (see below for details).
Roughly speaking, we show that
the
appropriately centered and scaled process $\process{X}$: \begin{equation}\label{e:eeeff}\{\varepsilon X_{\varepsilon^{-2}t}-\varepsilon^{-1}\bar {b^*} t\}_{t\ge0},\end{equation}
for some $\bar {b^*}\in \R^d$,
converges, as $\varepsilon\to0$,  in the path space endowed  with the Skorohod ${\rm J}_1$-topology to a $d$-dimensional zero-drift Brownian motion determined by covariance matrix of the form   \begin{equation}\begin{aligned}\label{ET1.2}\Sigma\,:=\,\Bigg(&\int_{\mathbb{T}_\tau^d}\sum_{k,l=1}^{d}\left(\delta_{ki}-\partial_k\beta_i(x)\right)c_{kl}(x)\left(\delta_{lj}-\partial_l\beta_j(x)\right)\pi(\D x)\\
&+\int_{\mathbb{T}_\tau^d}\int_{\R^{d}}y_iy_j\,\nu(x,\D y)\,\pi(\D x)\\
&+\int_{\mathbb{T}_\tau^d}\int_{\R^d}\bigl(\beta_i(x+y)-\beta_i(x)\bigr)\bigl(\beta_j(x+y)-\beta_j(x)\bigr)\nu(x,\D y)\,\pi(\D x)\\& -2\int_{\mathbb{T}_\tau^d}\int_{\R^d}y_i\bigl(\beta_j(x+y)-\beta_j(x)\bigr)\nu(x,\D y)\,\pi(\D x)
\Bigg)_{1\leq i,j\leq d}\,,\end{aligned}\end{equation}
(see \Cref{T1.1} for details). Equivalently, according to \cite[Theorem 7.1]{rene-bjorn-jian}, $$\lim_{\varepsilon\to0}\lVert\mathcal{L}_{\varepsilon}f-\varepsilon^{-1}\langle\bar {b^*},\nabla f\rangle-2^{-1}{\rm Tr}\,\Sigma \nabla^2f\rVert_\infty\,=\,0\,,\qquad f\in C_c^\infty(\R^d)\,,$$ where
\begin{align*} \mathcal{L}_\varepsilon f(x)\,=\, & \varepsilon^{-1}\langle b(x/\varepsilon),\nabla f(x)\rangle+
\frac{1}{2}{\rm Tr}\,c(x/\varepsilon)\nabla^{2}f(x)
\\&+\varepsilon^{-2}\int_{\R^{d}}
\left(f(x+\varepsilon y)-f(x)-\varepsilon \langle y,\nabla f(x)\rangle  \Ind_{B_1(0)}(y)\right)\,\nu(x/\varepsilon,\D y)\,.\end{align*}
 Let us remark that when $b(x)\equiv0$ and $\nu(x,\D y)$ is symmetric for all $x\in\R^d$, centralization in \cref{e:eeeff} is not necessary (that is, one can take $\bar {b^*}=0$), and $\beta(x)\equiv 0$ in \cref{ET1.2}. Thus, in this case, $\Sigma$  is reduced to
\begin{equation*}\begin{aligned} \Bigg(&\int_{\mathbb{T}_\tau^d} c_{ij}(x) \,\pi(\D x)+\int_{\mathbb{T}_\tau^d}\int_{\R^{d}}y_iy_j\,\nu(x,\D y)\,\pi(\D x)
\Bigg)_{1\leq i,j\leq d}\,,\end{aligned}\end{equation*} (see \cite{hom} for more details).

\subsection*{Preliminaries on L\'evy-Type Processes}\label{SS1.1}
 Let
$(\Omega,\mathcal{F},\{\mathbb{P}_{x}\}_{x\in\R^{d}},\process{\mathcal{F}},\process{\theta},$ \linebreak $\process{X})$, denoted by $\process{X}$
in the sequel, be a Markov process on  state space
$(\R^{d},\mathcal{B}(\R^{d}))$ (see \cite{BG-68}). Here, $d\geq1$, and
$\mathcal{B}(\R^{d})$ denotes the Borel $\sigma$-algebra on
$\R^{d}$. Due to the Markov property, the associated family of linear operators $\process{P}$ on
$B_b(\R^{d})$ (the space of bounded and Borel measurable functions),
defined by $$P_tf(x)\,:=\, \mathbb{E}_{x}\bigl[f(X_t)\bigr]\,,\qquad t\geq0\,,\
x\in\R^{d}\,,\ f\in B_b(\R^{d})\,,$$  forms a \emph{semigroup}  on the
Banach space $(B_b(\R^{d}),\lVert\cdot\rVert_\infty)$, that is, $P_0={\rm Id}$ and $P_s\circ
P_t=P_{s+t}$   for all $s,t\geq0$. Here, $\mathbb{E}_x$ stands for the expectation with respect to $\mathbb{P}_x(\D\omega)$, $x\in\R^{d}$, and
$\lVert\cdot\rVert_\infty$ and ${\rm Id}$ denote the supremum norm and the identity operator, respectively, on the space
$B_b(\R^{d})$. Moreover, the semigroup $\process{P}$ is
\emph{contractive} ($\lVert P_tf\rVert_{\infty}\leq\lVert f\rVert_{\infty}$
for all $t\geq0$ and  $f\in B_b(\R^{d})$) and \emph{positivity
	preserving} ($P_tf\geq 0$ for all $t\geq0$ and  $f\in
B_b(\R^{d})$ satisfying $f\geq0$). The \emph{infinitesimal generator}
$(\mathcal{A}^{b},\mathcal{D}_{\mathcal{A}^{b}})$ of the semigroup
$\process{P}$ (or of the process $\process{X}$) is a linear operator
$\mathcal{A}^{b}:\mathcal{D}_{\mathcal{A}^{b}}\to B_b(\R^{d})$
defined by
$$\mathcal{A}^{b}f\,:=\,
\lim_{t\to0}\frac{P_tf-f}{t},\qquad f\in\mathcal{D}_{\mathcal{A}^{b}}\,:=\,\left\{f\in B_b(\R^{d}):
\lim_{t\to0}\frac{P_t f-f}{t} \ \textrm{exists in}\
\lVert\cdot\rVert_\infty\right\}\,.
$$ We call $(\mathcal{A}^{b},\mathcal{D}_{\mathcal{A}^{b}})$ the \emph{$B_b$-generator} for short.
A Markov process $\process{X}$ is said to be a \emph{Feller process}
if its corresponding  semigroup $\process{P}$ forms a \emph{Feller
	semigroup}. This means that

\medskip

\begin{itemize}
	\item [(i)] $\process{P}$ enjoys the \emph{Feller property}, that is,  $P_t(C_\infty(\R^{d}))\subseteq C_\infty(\R^{d})$ for all $t\geq0$;
	
	\medskip
	
	\item [(ii)] $\process{P}$ is \emph{strongly continuous}, that is, $\lim_{t\to0}\lVert P_tf-f\rVert_{\infty}=0$ for all $f\in
	C_\infty(\R^{d})$.
\end{itemize}

\medskip

\noindent Here, $C_\infty(\R^{d})$ denotes
the space of continuous functions vanishing at infinity. Recall also that a Markov process $\process{X}$ is said to be a $C_b$-\emph{Feller}
(resp.\ \textit{strong Feller}) process if the corresponding  semigroup $\process{P}$ satisfies $P_tf\in C_b(\R^d)$ for all $t>0$ and all $f\in C_b(\R^d)$
(resp.\ $f\in B_b(\R^d)$), where $C_b(\R^d):=C(\R^d)\cap B_b(\R^d)$.
 Note that
every Feller semigroup $\process{P}$  can be uniquely extended to
$B_b(\R^{d})$ (see \cite[Section 3]{rene-conserv}). For notational
simplicity, we denote this extension  by $\process{P}$ again. Also,
let us remark that every Feller process (admits a modification that)
has  c\`adl\`ag sample paths and
possesses the strong Markov
property (see  \cite[Theorems 3.4.19 and
3.5.14]{jacobIII}).   Further,
in the case of Feller processes, we call
$(\mathcal{A}^{\infty},\mathcal{D}_{\mathcal{A}^{\infty}}):=(\mathcal{A}^{b},\mathcal{D}_{\mathcal{A}^{b}}\cap
C_\infty(\R^{d}))$ the \emph{Feller generator} for short. Observe
that in this case $\mathcal{D}_{\mathcal{A}^{\infty}}\subseteq
C_\infty(\R^{d})$ and
$\mathcal{A}^{\infty}(\mathcal{D}_{\mathcal{A}^{\infty}})\subseteq
C_\infty(\R^{d})$. If the set of smooth functions
with compact support $C_c^{\infty}(\R^{d})$ is contained in
$\mathcal{D}_{\mathcal{A}^{\infty}}$, that is, if the Feller
generator
$(\mathcal{A}^{\infty},\mathcal{D}_{\mathcal{A}^{\infty}})$ of the
Feller process $\process{X}$ satisfies

\medskip

\begin{description}
	\item[(\textbf{LTP1})]
	$C_c^{\infty}(\R^{d})\subseteq\mathcal{D}_{\mathcal{A}^{\infty}}$,
\end{description}

\medskip

\noindent then, according to \cite[Theorem 3.4]{courrege-symbol},
$\mathcal{A}^{\infty}|_{C_c^{\infty}(\R^{d})}$ is a \textit{pseudo-differential operator}, that is, it can be written in the form
\begin{equation}\label{PDO}\mathcal{A}^{\infty}|_{C_c^{\infty}(\R^{d})}f(x) \,=\, -\int_{\R^{d}}q(x,\xi)\E^{i\langle \xi,x\rangle}
\hat{f}(\xi)\, \D\xi\,,\end{equation}  where $\hat{f}(\xi):=
(2\pi)^{-d} \int_{\R^{d}} \E^{-i\langle\xi,x\rangle} f(x)\, \D x$ denotes
the Fourier transform of the function $f(x)$. The function $q :
\R^{d}\times \R^{d}\to \CC$ is called  the \emph{symbol}
of the pseudo-differential operator. It is measurable and locally
bounded in $(x,\xi)$, and is continuous and negative definite as a
function of $\xi$. Hence, by \cite[Theorem 3.7.7]{jacobI}, the
function $\xi\mapsto q(x,\xi)$ has for each $x\in\R^{d}$ the
following L\'{e}vy-Khintchine representation \begin{equation}\begin{aligned}\label{SIMB}q(x,\xi) \,=\,&a(x)-
i
\langle \xi, b(x)\rangle + \frac{1}{2}\langle\xi,c(x)\xi\rangle\\& +
\int_{\R^{d}}\left(1-\E^{i\langle\xi,y\rangle}+i\langle\xi, y\rangle\Ind_{B_1(0)}(y)\right)\nu(x,\D y)\,,\end{aligned}\end{equation}
where  $a(x)$ is a non-negative Borel measurable function, $b(x)$ is
an $\R^{d}$-valued Borel measurable function,
$c(x):=(c_{ij}(x))_{1\leq i,j\leq d}$ is a symmetric non-negative
definite $d\times d$ matrix-valued Borel measurable function,
and $\nu(x,\D y)$ is a non-negative Borel kernel on $\R^{d}\times
\mathcal{B}(\R^{d})$, called the \emph{L\'evy
kernel}, satisfying
$$\nu(x,\{0\})\,=\,0\qquad \textrm{and} \qquad \int_{\R^{d}}\left(1\wedge
|y|^{2}\right)\nu(x,\D y)\,<\,\infty,\qquad x\in\R^{d}\,.$$
The quadruple
$(a(x),b(x),c(x),\nu(x,\D y))$ is called the \emph{L\'{e}vy quadruple}
of
$\mathcal{A}^{\infty}|_{C_c^{\infty}(\R^{d})}$ (or of  $q(x,\xi)$).
Let us remark that  local boundedness of $q(x,\xi)$  implies  local boundedness of the corresponding $x$-coefficients, and \textit{vice versa}
(see \cite[Lemma 2.1 and Remark 2.2]{rene-holder}).
In the sequel, we assume the following condition on the symbol
$q(x,\xi)$:

\medskip

\begin{description}
	\item[(\textbf{LTP2})] $q(x,0)=a(x)\equiv0$.
\end{description}

\medskip

\noindent This condition is closely related to the conservativeness property of $\process{X}$.
Namely, under  the assumption that the
$x$-coefficients
 of $q(x,\xi)$ are uniformly bounded (which is certainly the case in the periodic setting), (\textbf{LTP2}) implies that $\process{X}$ is \emph{conservative}, that is, $\mathbb{P}_{x}(X_t\in\R^{d})=1$ for all $t\geq0$ and
$x\in\R^{d}$ (see \cite[Theorem 5.2]{rene-conserv}). Further, note that by combining \cref{PDO,SIMB}
 with (\textbf{LTP2}),  $\mathcal{A}^{\infty}|_{C_c^{\infty}(\R^{d})}$ takes the form \cref{IDO}.
Conversely, if $\mathcal{L}: C_c^\infty(\R^d)\to C_\infty(\R^d)$ is a linear  operator  of the form    \cref{IDO} satisfying the so-called \textit{positive maximum principle} ($\mathcal{L}f(x_0)\le0$ for any $f\in C_c^\infty(\R^d)$ with $f(x_0)=\sup_{x\in\R^d}f(x)\ge0$) and such that $\bigl(\lambda-\mathcal{L}\bigr)(C_c^\infty(\R^d))$ is dense in $C_\infty(\R^d)$ for some (or all) $\lambda>0$, then, according to
 the
 Hille-Yosida-Ray theorem, $\mathcal{L}$ is closable and the closure  is the generator of a Feller semigroup. In particular, the corresponding Feller process is a L\'evy-type process.
In the case when  $q(x,\xi)$ does not depend
on the variable $x\in\R^{d}$, $\process{X}$ becomes a \emph{L\'evy
	process}, that is, a stochastic process   with stationary and
independent increments. Moreover, unlike Feller
processes, every L\'evy process is uniquely and completely
characterized through its corresponding symbol (see \cite[Theorems 7.10 and 8.1]{sato-book} and \cite[Example 2.26]{rene-bjorn-jian}). According to this, it is not hard to
check that every
conservative L\'evy process satisfies conditions
(\textbf{LTP1}) and (\textbf{LTP2})  (see \cite[Theorem 31.5]{sato-book}).
Thus, the class of processes we consider in this article contains
L\'evy processes. Throughout this article, the symbol $\process{X}$ denotes a Feller
process satisfying conditions (\textbf{LTP1}) and (\textbf{LTP2}). Such a
process is called a \emph{L\'evy-type process} (LTP).
If $\nu(x,\D y)\equiv0$,  $\process{X}$ is  called a \emph{diffusion process}.  Note that this definition agrees with the standard definition of (Feller-Dynkin) diffusions (see \cite[Chapter III.2]{rogersI}). A typical example of a LTP is a solution to the following SDE
\begin{equation}\label{SDE1}
\D X_t\,=\,\Phi(X_{t-})\,\D Y_t\,,\qquad  X_0=x\in\R^d\,,
\end{equation} where $\Phi:\R^{d}\to\R^{d\times n}$ is locally Lipschitz continuous and bounded (which is not a restriction in the periodic setting), and $\process{Y}$ is an $n$-dimensional L\'evy process with symbol $q_Y(\xi)$. Namely, in \cite[Theorems 3.1 and 3.5 and Corollary 3.3]{schnurr} it has been shown that the unique solution $\process{X}$ to the SDE in \cref{SDE1} (which exists by standard arguments) is a LTP with symbol of the form $q(x,\xi)=q_Y(\Phi'(x)\xi).$ Here, for a  matrix $M$, $M'$ denotes its transpose.  Observe that the following SDE is a special case of \cref{SDE1},
\begin{equation}\label{SDE2}
\D X_t\,=\,\Phi_1(X_t
)\,\D t+\Phi_2(X_t
)\,\D B_t\,+\Phi_3(X_{t-})\,\D Z_t\,,\qquad  X_0=x\in\R^d\,,
\end{equation}
where  $\Phi_1:\R^{d}\to\R^{d}$, $\Phi_2:\R^{d}\to\R^{d\times p}$ and $\Phi_3:\R^{d}\to\R^{d\times q}$, with $p+q=n-1$, are locally Lipschitz continuous and bounded, $\process{B}$ is a $p$-dimensional Brownian motion, and $\process{Z}$ is a $q$-dimensional pure-jump L\'evy process (that is, a L\'evy process determined by a L\'evy triplet of the form $(0,0,\nu_Z(\D y))$). Namely, set
$\Phi(x)=\bigl(\Phi_1(x),\Phi_2(x),\Phi_3(x)\bigr)$
 for any $x\in \R^d$, and $Y_t=(t,B_t,Z_t)'$
 for $t\ge0$. For more
on L\'evy-type processes  we refer the readers to the monograph
\cite{rene-bjorn-jian}.

\subsection*{LTPs with Periodic Coefficients}
Let $\tau=(\tau_1,\ldots, \tau_{d})\in (0,\infty)^{d}$ be
fixed, and let $\tau\ZZ^{d}:=\tau_1\ZZ\times\ldots\times\tau_{d} \ZZ.$
For $k=(k_1,\dots,k_d)\in\ZZ^d$ define $\tau\odot k:=(\tau_1k_1,\dots,\tau_dk_d)$, and for $x\in\R^{d}$ define
$$x_\tau\,:=\,\{y\in\R^{d}:x-y\in\tau\ZZ^{d}\}\qquad\textrm{and}\qquad
\R^{d}/\tau\ZZ^{d}\,:=\,\{x_\tau:x\in\R^{d}\}\,.$$ In the sequel, we denote $\mathbb{T}^d_\tau=\R^{d}/\tau\ZZ^{d}$. Clearly,
$\mathbb{T}^d_\tau$ is obtained
by identifying the opposite
faces of $[0,\tau]:=[0,\tau_1]\times\ldots\times[0,\tau_{d}]$.
Let
$\Pi_{\tau} : \R^{d}\to \mathbb{T}^d_\tau$, $\Pi_{\tau}(x):=x_\tau$, be the covering map.
A
function $f:\R^{d}\to\R$ is called \textit{$\tau$-periodic} if
$$f(x+\tau\odot k)\,=\,f(x)\,,\qquad x\in\R^{d}\,,\ k\in\ZZ^d\,.$$
Clearly, every $\tau$-periodic function $f(x)$ is completely and uniquely determined by its restriction $f|_{[0,\tau]}(x)$ to $[0,\tau]$, and since  $f|_{[0,\tau]}(x)$ assumes the same value on opposite faces of $[0,\tau]$, it can be identified  by a function $f_\tau:\mathbb{T}^d_\tau\to\R$ given with $f_\tau (x_\tau)=f(x).$
For notational convenience, we will often omit the subscript $\tau$ and simply write $x$ instead of $x_\tau$, and $f(x)$ instead of $f_\tau(x)$.

Let now $\process{X}$ be  a LTP with semigroup $\process{P}$, symbol $q(x,\xi)$ and L\'evy triplet $(b(x),c(x),\nu(x,\D y))$, satisfying:
\medskip

\begin{description}
	\item [(\textbf{C1})]  $\displaystyle x\mapsto q(x,\xi)$  is $\tau$-periodic for all $\xi\in\R^{d}$.
\end{description}

\medskip

 \noindent Directly  from the L\'evy-Khintchine formula it follows that (\textbf{C1}) is equivalent to the $\tau$-periodicity of the corresponding L\'evy triplet $(b(x),c(x),\nu(x,\D y))$, which  in turn is equivalent to the $\tau$-periodicity of $x\mapsto\mathbb{P}_x(X_t-x\in\D y)$ $($see \cite[Section 4]{hom}$)$.
This immediately implies  that $\process{P}$ preserves the class of all bounded Borel measurable $\tau$-periodic functions, that is, the function $x\mapsto P_tf(x)$ is $\tau$-periodic for all $t\geq0$ and all $\tau$-periodic $f\in B_b(\R^{d})$. Now, together with this, a straightforward adaptation of \cite[Proposition 3.8.3]{vasili-book} entails that  $\{\Pi_\tau(X_t)\}_{t\ge0}$ is a
Markov process on $(\mathbb{T}_\tau^d,\mathcal{B}(\mathbb{T}_\tau^d))$ with positivity preserving contraction semigroup $\process{P^{\tau}}$
(on the space $(B_b(\mathbb{T}_\tau^d),\lVert\cdot\rVert_\infty)$) given by
$$P_t^{\tau}f(x)\,:=\,\mathbb{E}^{\tau}_x\bigl[f(\Pi_\tau(X_t))\bigr]\,=\,\int_{\mathbb{T}_\tau^d}f(y)\mathbb{P}_x^{\tau}\bigl(\Pi_\tau(X_t)\in \D y\bigr) \,,$$ for $t\geq0,$ $ x\in\mathbb{T}_\tau^d$ and $f\in B_b(\mathbb{T}_\tau^d)$. Here, $\mathcal{B}(\mathbb{T}_\tau^d)$ stands for the Borel $\sigma$-algebra on $\mathbb{T}_\tau^d$ (with respect to the standard quotient topology),  $B_b(\mathbb{T}_\tau^d)$ denotes the class of all bounded Borel measurable functions $f:\mathbb{T}_\tau^d\to\R$ (which can be identified with the class of all $\tau$-periodic bounded Borel measurable functions $f:\R^d\to\R$), and
\begin{equation*}\label{E2.1}\mathbb{P}_x^{\tau}\bigl(\Pi_\tau(X_t)\in B\bigr)\,:=\,\mathbb{P}_{z_x}\bigl(X_t\in \Pi_\tau^{-1}(B)\bigr)\,,\qquad t\ge0\,,\ x\in\mathbb{T}_\tau^d\,,\ B\in\mathcal{B}(\mathbb{T}_\tau^d)\,,\end{equation*}  with    $z_x$  being an arbitrary point  in
$\Pi^{-1}_{\tau}(\{x\})$.

Further, assume that
\smallskip

\begin{description}
	\item [(\textbf{C2})]  $\process{X}$ is strong Feller and \textit{open-set irreducible}, that is, for any $t>0$, any $x\in \R^d$ and any non-empty open set $O\subseteq\R^d$,
	$\displaystyle\mathbb{P}_{x}(X_t\in O)>0$.
\end{description}
\smallskip

\noindent Clearly, (\textbf{C2})  automatically implies  that the process $\{\Pi_\tau(X_t)\}_{t\ge0}$ is strong Feller and open-set irreducible, too.
Hence, by employing \cite[Remark 3.2]{liang-sch-wang} and \cite[Theorem 1.1]{wat} we have proved the following.

\begin{proposition}\label{SS2.1} The process $\{\Pi_\tau(X_t)\}_{t\ge0}$ admits a unique invariant probability measure $\pi(\D x)$, that is, a measure $\pi(\D x)$ satisfying $$\int_{\mathbb{T}_\tau^d}\mathbb{P}_x^\tau\bigl(\Pi_\tau(X_t)\in B\bigr)\,\pi(\D x)\,=\,\pi(B)\,,\qquad t\ge0\,,\ B\in\mathcal{B}(\mathbb{T}_\tau^d)\,,$$ such that \begin{equation}
	\label{eq:erg}
	\sup_{x\in\mathbb{T}_\tau^d}\lVert \mathbb{P}_x^\tau\bigl(\Pi_\tau(X_t)\in \D y\bigr)-\pi(\D y) \rVert_{{\rm TV}}\,\le\, \Gamma \E^{-\gamma t}\,,\qquad t\ge0\,	\end{equation} for some $\gamma,\Gamma>0$, where $\lVert\cdot\rVert_{{\rm TV}}$ denotes the total variation norm on the space of signed measures on $\mathcal{B}(\mathbb{T}_\tau^d)$.
\end{proposition}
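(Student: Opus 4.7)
My plan is to exploit the compactness of the torus $\mathbb{T}_\tau^d$ together with the strong Feller and open-set irreducibility properties inherited from $\process{X}$. The overall structure splits into three steps: existence of an invariant measure, its uniqueness, and the exponential rate in total variation. The main obstacle, as I see it, is upgrading strong Feller plus irreducibility to a uniform-in-$x$ Doeblin-type minorization on the whole torus, and this is where the cited results of Liang--Schilling--Wang and Wa[ng] do the heavy lifting.

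\textbf{Existence.} Because $\{X_t\}$ is Feller on $\R^d$, a direct check using the covering map $\Pi_\tau$ and the $\tau$-periodicity noted after (\textbf{C1}) shows that $\process{P^\tau}$ maps $C(\mathbb{T}_\tau^d)$ to itself. Since $\mathbb{T}_\tau^d$ is compact, the family of empirical occupation measures $\mu_{T,x}(\D y):=T^{-1}\int_0^T\mathbb{P}_x^\tau(\Pi_\tau(X_s)\in\D y)\,\D s$ is automatically tight, and a standard Krylov--Bogolyubov argument on a compact metric space yields an invariant probability measure $\pi$.

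\textbf{Uniqueness.} Strong Feller of $\process{X}$ transfers to $\{\Pi_\tau(X_t)\}_{t\ge0}$ because $\Pi_\tau$ is a local homeomorphism and $P_t^\tau f$ can be written locally as $P_t(f\circ\Pi_\tau)$ restricted to a fundamental domain. Similarly, open-set irreducibility transfers: if $O\subseteq\mathbb{T}_\tau^d$ is non-empty and open, then $\Pi_\tau^{-1}(O)$ is a non-empty open subset of $\R^d$, and (\textbf{C2}) forces $P_t^\tau(x,O)>0$ for all $x$ and $t>0$. Under these two properties, any two invariant probability measures must be mutually absolutely continuous (Doob's theorem), and on an irreducible compact state space this forces them to coincide.

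\textbf{Exponential ergodicity.} This is the delicate step. On a compact space the combination of strong Feller plus open-set irreducibility yields a uniform minorization: there exist $t_0>0$, $\delta>0$ and a probability measure $\eta$ on $\mathbb{T}_\tau^d$ such that $P_{t_0}^\tau(x,\cdot)\ge\delta\,\eta(\cdot)$ for every $x\in\mathbb{T}_\tau^d$. Informally: strong Feller gives lower semi-continuity of $x\mapsto P_{t_0}^\tau(x,O)$ (after a standard regularization), irreducibility makes this function strictly positive on $\mathbb{T}_\tau^d$, and compactness upgrades this to a uniform positive lower bound. This is precisely the Doeblin condition; iterating it in the usual way gives $\sup_x\lVert P_{nt_0}^\tau(x,\cdot)-\pi\rVert_{\rm TV}\le(1-\delta)^n$, which interpolated over $t\in[nt_0,(n+1)t_0]$ yields \cref{eq:erg} with $\gamma=-t_0^{-1}\log(1-\delta)$ and an appropriate $\Gamma$. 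The technical content of this step is exactly the minorization, and this is what \cite[Remark 3.2]{liang-sch-wang} (strong Feller structure) and \cite[Theorem 1.1]{wat} (ergodic theorem for compact-state Markov processes) provide, so I would simply invoke them rather than redo the Meyn--Tweedie machinery by hand.
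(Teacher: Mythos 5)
Your proposal is correct and takes essentially the same route as the paper. The paper itself supplies no detailed argument for Proposition 2.1 beyond the observation that (\textbf{C2}) passes to $\{\Pi_\tau(X_t)\}_{t\ge0}$ and a citation of \cite[Remark 3.2]{liang-sch-wang} and \cite[Theorem 1.1]{wat} (with the remark noting the Meyn--Tweedie and Doeblin alternatives); your three steps --- Krylov--Bogolyubov for existence, a Doob/Khas'minskii uniqueness argument from strong Feller plus irreducibility, and a uniform Doeblin minorization on the compact torus for the exponential rate --- are precisely the standard machinery those references encapsulate, and you correctly identify the minorization step as the one where they do the real work.
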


\medskip

\begin{remark}  Alternatively, \Cref{SS2.1} is a consequence of  \cite[Theorems 3.2 and 8.1]{meyn-tweedie-II} and \cite[Theorem 3.2]{tweedie-mproc}, or  \cite[Theorem 6.1]{meynIII} and \cite[Theorem 5.1]{tweedie-mproc} (by setting $V(x)\equiv1$ and $c=d=1$). Also,  if instead of (\textbf{C2}) we assume
	
	\medskip
	
	\begin{description}
		\item [\textbf{($\widetilde {\text{C2}})$}] $\process{X}$ admits a density function $p_t(x,y)$ (with respect to the Lebesgue measure) satisfying
		
		\medskip
		
		\begin{itemize}
			\item[(i)] for any $t>0$, the function $(x,y)\mapsto p_t(x,y)$ is continuous on $\R^d\times \R^d$;

			\medskip
			
			\item[(ii)] there is a non-empty open set $O\subseteq\R^d$  such that $p_t(x,y)>0$ for all $t>0$, $x\in \R^d$ and $y\in O$,
		\end{itemize}
	\end{description}
	
	\medskip
	
	\noindent which guarantees that
D\"oblin's irreducibility condition holds true (see \cite[page 256]{doob}), then \Cref{SS2.1} follows from \cite[Theorem 3.1]{benso-lions-book}.
\end{remark}

Conditions (in terms of the L\'evy triplet $(b(x),c(x),\nu(x,\D y))$) ensuring (\textbf{C2}) are discussed in \Cref{examples}.

\subsection*{The Semimartingale Nature of LTPs}

As we have already commented, the problem of
 homogenization of an operator
of the form \cref{IDO} corresponding to a LTP
   is equivalent to the convergence of the corresponding family of LTPs in the path space endowed with the Skorohod ${\rm J}_1$ topology (see \cite[Theorem 7.1]{rene-bjorn-jian}). According to
    \cite[Lemma 3.2]{rene-holder},
$\{X_t\}_{t\ge0}$ is a
$\mathbb{P}_{x}$- semimartingale (with respect to the natural filtration)
for any $x\in\R^d$. Therefore, in order to show this convergence, our aim is to employ \cite[Theorem VIII.2.17]{jacod} which states that a sequence of semimartingales converges in the path space endowed with the Skorohod ${\rm J}_1$ topology to a process with independent increments if the corresponding semimartingale characteristics converge in probability.

Let us now  recall the notion of characteristics of a semimartingale
(see \cite{jacod}). Let
$(\Omega,\mathcal{F},\process{\mathcal{F}},\mathbb{P},\process{S})$, denoted by
$\process{S}$ in the sequel, be a $d$-dimensional semimartingale, and
let $h:\R^{d}\to\R^{d}$ be a truncation function (that is, a
bounded Borel measurable function which satisfies $h(x)=x$ in a neighborhood of
the origin).
Define
$$\check{S}(h)_t\,:=\,\sum_{s\leq t}\bigl(\Delta S_s-h(\Delta S_s)\bigr)\quad
\textrm{and} \quad S(h)_t\,:=\,S_t-\check{S}(h)_t\,,\qquad t\geq0\,,$$ where the process
$\process{\Delta S}$ is defined by $\Delta S_t:=S_t-S_{t-}$ and
$\Delta S_0:=S_0$. The process $\process{S(h)}$ is a \emph{special
	semimartingale}, that is, it admits a unique decomposition
\begin{equation}\label{SS}S(h)_t\,=\,S_0+M(h)_t+B(h)_t\,,\end{equation} where $\process{M(h)}$ is a local
martingale, and $\process{B(h)}$ is a predictable process of bounded
variation.
\begin{definition}
	Let $\process{S}$  be a
	semimartingale, and let $h:\R^{d}\longrightarrow\R^{d}$ be a truncation
	function. Furthermore, let $\process{B(h)}$  be the predictable
	process
	of bounded variation appearing in \cref{SS},  let $N(\omega,\D y,\D s)$ be the
	compensator of the jump measure
	$$\mu(\omega,\D y,\D s)\,:=\,\sum_{s:\,\Delta S_s(\omega)\neq 0}\delta_{(\Delta S_s(\omega),s)}(\D y,\D s)$$ of the process
	$\process{S}$, and let $\process{C}=\{\bigl(C_t^{ij}\bigr)_{1\leq i,j\leq d})\}_{t\geq0}$ be the quadratic co-variation
	process for $\process{S^{c}}$ (continuous martingale part of
	$\process{S}$), that is,
	$C^{ij}_t=\langle S^{i,c}_t,S^{j,c}_t\rangle.$  Then $(B,C,N)$ is called
	the \emph{characteristics} of the semimartingale $\process{S}$
	(relative to $h(x)$). In addition, by defining $\tilde{C}(h)^{ij}_t:=\langle
	M(h)^{i}_t,M(h)^{j}_t\rangle$, $i,j=1,\ldots,d$, where $\process{M(h)}$ is the local martingale
	appearing in \cref{SS},  $(B,\tilde{C},N)$ is called the \emph{modified
		characteristics} of the semimartingale $\process{S}$ (relative to $h(x)$).
\end{definition}

Now, according to  \cite[Theorem 3.5]{rene-holder} and \cite[Proposition II.2.17]{jacod} we see that the (modified) characteristics of a LTP $\{X_t\}_{t\ge0}$ (with respect to a truncation function $h(x)$)  are given by
\begin{align*}B(h)^{i}_t&\,=\,\int_0^{t}b_i(X_{s})\,\D s+\int_0^{t}\int_{\R^d}\left(h_i(y)-y_i\Ind_{B_1(0)}(y)\right)\nu(X_{s},\D y)\,\D s\,,\\
C^{ij}_t&\,=\,\int_0^{t}c_{ij}(X_{s})\,\D s\,,\\
N(\D y,\D s)&\,=\,\nu(X_{s},\D y)\,\D s\,,\\
\tilde{C}(h)^{ij}_t&=\int_0^{t}c_{ij}(X_{s})\,\D s+\int_0^{t}\int_{\R^{d}}h_{i}\left( y\right)h_{j}\left( y\right)\nu(X_{s},\D y)\,\D s\,,
\end{align*} for $t\geq0$ and $i,j=1,\ldots,d.$

In the sequel, we  assume that $\process{X}$ admits ``small jumps'' only, that is,

\smallskip

\begin{description}
	\item [(\textbf{C3})]  $\displaystyle \sup_{x\in \R^d}\int_{\R^{d}}|y|^2\nu(x,\D y)<\infty$.
\end{description}
\smallskip

\noindent
As a direct consequence of (\textbf{C3}) and \cite[Proposition II.2.29]{jacod} we see that  $\process{X}$ itself is a special semimartingale, and for the truncation function we can take $h(x)=x$. In particular, if $\nu(x,\D y)$ is also symmetric for every $x\in\R^d$, the first characteristic
$B(h)^{i}_t$
equals to $\int_0^{t}b_i(X_{s})\,\D s$ for $t\ge0$ and $i=1,\dots,d$.

 Observe next that    $\process{X}$  is  a Hunt process (since it is Feller). Thus, $\process{X}$ is an It\^{o} process in the sense of \cite{cinlar} (a semimartngale Hunt process with characteristics of the form as above).
Now,   \cite[Theorem 3.33]{cinlar} asserts that
there exist  a suitable enlargement of the stochastic basis $(\Omega,\mathcal{F},\{\mathbb{P}_{x}\}_{x\in\R^{d}},\process{\mathcal{F}},\process{\theta})$, say $(\widetilde{\Omega},\mathcal{\widetilde{F}},\{\mathbb{\widetilde{P}}_{x}\}_{x\in\R^{d}},\process{\mathcal{\widetilde{F}}},\process{\widetilde{\theta}})$, supporting a $d$-dimensional Brownian motion $\process{\tilde{W}}$ and a Poisson random measure $\tilde{\mu}(\cdot,\D z,\D s)$ on
 $\mathcal{B}(\R)\otimes\mathcal{B}([0,\infty))$ with compensator
 $\tilde{\nu}(\D z)\,\D s$, such that $\{X_t\}_{t\ge0}$ is
a solution to the following stochastic differential equation
\begin{align*}X_t\,=\,&x+\int_0^{t}b(X_{s
})\,\D s+\int_0^{t}\tilde\sigma(X_{
s})\,\D \tilde{W}_s\\&+\int_0^{t}\int_{\R}k(X_{s-},z)\Ind_{\{u:|k(X_{s-},u)|< 1\}}(z)\left(\tilde{\mu}(\cdot,\D z,\D s)-\tilde{\nu}(\D z)\,\D s\right)\\&+\int_0^{t}\int_{\R}k(X_{s-},z)\Ind_{\{u:|k(X_{s-},u)|\ge1\}}(z)\,\tilde{\mu}(\cdot,\D z,\D s)\,,
\end{align*}
where
$\tilde\sigma(x)$
is a $d \times d$ matrix-valued Borel measurable function such that $\tilde\sigma(x)'\tilde\sigma(x)=c(x)$ for any $x\in \R^d$,
$\tilde{\nu}(\D z)$ is any given $\sigma$-finite non-finite and non-atomic measure  on $\mathcal{B}(\R)$, and
$k:\R^{d}\times\R\to\R^{d}$ is a Borel measurable function  satisfying
$$\mu(\cdot,\D y,\D s)\,=\, \tilde{\mu}\bigl(\cdot,\{(z,u)\in\R\times[0,\infty):(k(X_{u-},z),u)\in(\D y,\D s)\}\bigr)\,,$$
and
$$\nu\bigl(x,\D y\bigr)\,=\,\tilde{\nu}\bigl(\{z\in\R:k(x,z)\in \D y\}\bigr)\,.$$
Thus, due to this and (\textbf{C3})
we have that
\begin{equation}\begin{aligned}\label{SDE}
X_t\,=\,&x+\int_0^{t}b(X_{s
})\D s+\int_0^{t}\tilde\sigma(X_{s
})\D \tilde{W}_s\\&+\int_0^{t}\int_{\R}k(X_{s
},z)\Ind_{\{u:|k(X_{s
},u)|\ge1\}}(z)\,\tilde{\nu}(\D z)\,\D s\\&+\int_0^{t}\int_{\R}k(X_{s-},z)\left(\tilde{\mu}(\cdot,\D z,\D s)-\tilde{\nu}(\D z)\, \D s\right)\,.
\end{aligned}\end{equation}
From this equation we also read the unique special semimartingale  decomposition  of $\{X_t\}_{t\ge0}$.

\subsection*{Main
Result}
Before stating the main result of this article, we  introduce some notation we need.
 Denote by   $C_b^k(\R^d)$ with  $k\in\N_0
 :=\{0,1,2,\dots\}$
  the space of $k$ times differentiable functions such that all derivatives up to order $k$ are bounded. This space is a Banach space endowed with the norm $\lVert f\rVert_k:=\sum_{m:\,|m|\le k}\lVert D^m f\rVert_\infty$, where $m=(m_1,\dots,m_d)\in\N^d_0$, $|m|:=m_1+\cdots+m_d,$ and $D^m:=\partial^{m_1}\dots\partial^{m_d}.$ Denote also $C_b^\infty(\R^d):=\cap_{k\in\N_0}C_b^k(\R^d)$.
 Further, a function $\phi:(0,1]\to(0,\infty)$ is said to be \textit{almost increasing} if there is
 a constant
 $\underline{\kappa}\in(0,1]$ such that $\underline{\kappa}\,\phi(r)\le\phi(R)$ for all $r,R\in(0,1]$
  with $r\le R$. Analogously, $\phi:(0,1]\to(0,\infty)$ is said to be \textit{almost decreasing} if there is
 a constant
 $\overline{\kappa}\in[1,\infty)$ such that $\phi(R)\le\overline{\kappa}\,\phi(r)$ for all $r,R\in(0,1]$
 with $r\le R$.
Let now $\psi:(0,1]\to[0,\infty)$ be such that $\psi(1)=1$ and $\lim_{r\to0}\psi(r)=0$.  For $f\in C_b(\R^d)$ and $j\in\N_0$,  define $$[f]_{-j,\psi}\,:=\,\sup_{x\in\R^d}\sup_{h\in \bar {B}_1(0)\setminus\{0\}}\frac{|f(x+h)-f(x)|}{\psi(|h|)|h|^{-j}}\,,$$ where $\bar {B}_r(x)$ stands for the (topologically) closed ball of radius $r$ around $x\in\R^d.$ Also, let \begin{align*}
m_\psi\,:=\,\sup\{\alpha\in\R:\,r\mapsto\psi(r)/r^\alpha\ \text{is almost increasing in}\ (0,1]\}\,,\\
M_\psi\,:=\,\inf\{\alpha\in\R:\,r\mapsto\psi(r)/r^\alpha\ \text{is almost decreasing in}\ (0,1]\}\,.
\end{align*} According to \cite[Theorem 2.2.2]{goldie}, $m_\psi\le M_\psi$. If $m_\psi>0$,  we call $\psi(r)$ the \textit{H\"{o}lder exponent}. In this case, if $m_\psi\in(k,k+1]$ for some $k\in\N_0$, define $$C_b^\psi(\R^d)\,:=\,\{f\in C^k_b(\R^d):\, [D^mf]_{-k,\psi}<\infty\ \text{for}\ |m|=k \}\,.$$ This space is called  a \textit{generalized H\"{o}lder space}, and it is a normed vector space with the norm  $$\lVert f\rVert_\psi\,:=\,\lVert f\rVert_{k}+\sum_{m:\,|m|=k}[D^mf]_{-k,\psi}\,,$$
(see \cite{kassmann2}).
Observe that the product of two H\"{o}lder exponents is a H\"{o}lder exponent, and
that if $m_\psi\in(k,k+1]$ for some $k\in\N_0$ then $C^{k+1}_b(\R^d)\subsetneq C_b^\psi(\R^d)\subsetneq C_b^k(\R^d)$.
  In particular, when $\psi(r)=r^\gamma$ for some $\gamma>0$,  $C_b^\psi(\R^d)$ becomes the classical H\"{o}lder space of order $\gamma$ (usually denoted by  $C_b^\gamma(\R^d)$), which is a Banach space together with the above-defined norm (which we denote by $\lVert\cdot\rVert_\gamma$).
Since $f\leftrightarrow f_\tau$ gives a one-to-one correspondence between $\{f:\R^d\to\R:f\ \text{is}\ \tau\text{-periodic}\}$ and $\{f_\tau:\mathbb{T}_\tau^d\to\R\}$, in  an analogous way we define $C^k(\mathbb{T}^d_\tau)$ and $C^\psi(\mathbb{T}^d_\tau)$.

We are now in position to state the main result of this article, the proof of which  is given in
\Cref{S2}.
\begin{theorem}\label{T1.1}Let  $\process{X}$ be a $d$-dimensional LTP with  semigroup $\process{P}$, symbol $q(x,\xi)$ and L\'evy triplet $(b(x),c(x),\nu(x,\D y))$,
	satisfying $($\textbf{C1}$)$, $($\textbf{C2}$)$, $($\textbf{C3}$)$ and
	
	\medskip
	
	\begin{description}
		\item [(\textbf{C4})]
		 $\displaystyle x\mapsto b^*(x):=b(x)+\int_{B_1^c(0)}y\,\nu(x,\D y)$ is of class $C_b^\psi(\R^d)$ for some H\"{o}lder exponent $\psi(r)$, and
		
		 \medskip
		
			\begin{itemize}
				\item[(i)] for some $t_0>0$, any $t\in(0, t_0]$ and any $\tau$-periodic $f\in C_b(\R^d)$,
				$$\|P_t f\|_{\psi}\,\le\, C(t) \|f\|_\infty\,,$$ where $\int_0^{t_0} C(t)\,\D t <\infty;$
				
				\medskip
				
				\item[(ii)] for some $\lambda>0$ and any $\tau$-periodic $f\in C_b^\psi(\R^d)$ with $\int_{\mathbb{T}_\tau^d}f_\tau(x)\,\pi(\D x)=0$, the Poisson equation
				\begin{equation}\label{e:po-2} \lambda u- \mathcal{A}^b u\,=\,f  \end{equation} admits a  $\tau$-periodic solution $u_{\lambda,f}\in C_b^{\varphi\psi}(\R^d)$ for some H\"{o}lder exponent $\varphi(r)$.
\end{itemize}
			\end{description}

	\smallskip

\noindent	Then,

\medskip

\begin{itemize}
	\item [(a)] the Poisson equation \begin{equation}\label{e:po-1}
\mathcal{A}^b\beta=b^*-\bar {b^*}
	\end{equation} admits a $\tau$-periodic solution $\beta\in C_b^{\varphi\psi}(\R^d)$. Moreover, $\beta(x)$ is the unique solution in the class of continuous and periodic solutions  to \cref{e:po-1} satisfying
$\int_{\mathbb{T}_\tau^d}\beta_\tau(x)\,\pi(\D x)=0$.
	
	\medskip

	\item[(b)] in any of the following three cases
	
	\medskip
	
	\begin{itemize}
		\item [(1)] $\beta\in C_b^2(\R^d)$
		if $c(x)\not\equiv0$;
		
		\medskip
	
		\item[(2)] $m_{\varphi\psi}>1$  if $c(x)\equiv0$ and \begin{equation}\label{eq:it}
		\sup_{x\in \R^d}\int_{B_1(0)}\varphi(|y|)\psi(|y|)\,\nu(x,\D y)\,<\,\infty\,;
		\end{equation}
		
		\medskip \item [(3)] $\beta\in C^1_b(\R^d)$ if $c(x)\equiv0$ and \begin{equation}\label{eq:it2}
		\sup_{x\in \R^d}\int_{B_1(0)}|y|\,\nu(x,\D y)\,<\,\infty\,,
		\end{equation}
	\end{itemize}
		for any initial
	distribution  of $\process{X}$,
	\begin{equation}\label{ET1.1}\left\{\varepsilon X_{\varepsilon^{-2}t}-\varepsilon^{-1}\bar {b^*} t\right\}_{t\geq0}\xRightarrow[]{\varepsilon\to0}\process{W}\,.\end{equation}
	\end{itemize}

 \medskip

\noindent	Here, $$ \bar {b^*}\,:=\,\int_{\mathbb{T}^d_\tau}b^*(x)\,\pi(\D x)\,,$$ $\Rightarrow$ denotes the convergence in the space of c\`adl\`ag functions
	endowed with the Skorohod  ${\rm J}_1$-topology, and
	$\process{W}$ is a  $d$-dimensional zero-drift Brownian motion
	determined by  covariance matrix  $\Sigma$ given in \cref{ET1.2}.
 \end{theorem}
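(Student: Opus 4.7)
The overall strategy is to view the rescaled process $\tilde X^\varepsilon_t := \varepsilon X_{\varepsilon^{-2}t}-\varepsilon^{-1}\bar{b^*}t$ as a $d$-dimensional semimartingale and verify the convergence of its modified characteristics to those of the Brownian motion $\process{W}$ with covariance $\Sigma$, in the spirit of \cite[Theorem VIII.2.17]{jacod}. Subtracting $\varepsilon^{-1}\bar{b^*}t$ removes only the \emph{mean} drift, not the fluctuations of $b^*(X_{\cdot/\varepsilon^{2}})$, so we introduce the corrector $\beta$ solving the Poisson equation \cref{e:po-1} in order to rewrite the divergent drift $\varepsilon^{-1}\int_0^{\varepsilon^{-2}t}(b^*-\bar{b^*})(X_s)\,\D s$ as a bounded boundary term plus a martingale.

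For part (a), fix $\lambda>0$ and apply (\textbf{C4})(ii) componentwise to $b^*-\bar{b^*}$, which is $\tau$-periodic, of class $C_b^\psi$, and $\pi$-centered, to obtain $u_\lambda\in C_b^{\varphi\psi}(\R^d)$ solving $\lambda u_\lambda-\mathcal{A}^b u_\lambda=b^*-\bar{b^*}$. The probabilistic representation
\begin{equation*}
u_\lambda(x)\,=\,\int_0^\infty \E^{-\lambda s}P_s(b^*-\bar{b^*})(x)\,\D s
\end{equation*}
together with the exponential ergodicity \cref{eq:erg} gives $\lVert P_s(b^*-\bar{b^*})\rVert_\infty\le 2\Gamma\lVert b^*\rVert_\infty \E^{-\gamma s}$, so $\{u_\lambda-\pi(u_\lambda)\}_{\lambda\downto 0}$ is uniformly bounded; the Hölder bound from (\textbf{C4})(ii) provides equicontinuity, and Arzelà--Ascoli extracts a $\tau$-periodic, $\pi$-centered $\beta\in C_b^{\varphi\psi}(\R^d)$ solving $\mathcal{A}^b\beta=b^*-\bar{b^*}$. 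Uniqueness in the class of continuous $\tau$-periodic $\pi$-centered solutions follows because any such $v$ with $\mathcal{A}^b v=0$ satisfies $P_t v=v$, which by \cref{eq:erg} forces $v\equiv \pi(v)=0$.

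For part (b), apply Itô's formula to $\beta(X_t)$ via the SDE \cref{SDE} --- the classical formula in case (1), and a generalized Itô formula for pure-jump semimartingales in cases (2)--(3) where only first-order derivatives of $\beta$ are needed --- to obtain
\begin{equation*}
\beta(X_t)-\beta(X_0)\,=\,\int_0^t (b^*-\bar{b^*})(X_s)\,\D s+\tilde M_t\,,
\end{equation*}
where $\tilde M$ is a square-integrable martingale. Combining this with the special semimartingale decomposition $X_t=X_0+\int_0^t b^*(X_s)\,\D s+M_t^X$ (permitted by (\textbf{C3}), which allows the truncation $h(x)=x$) gives
\begin{equation*}
\tilde X^\varepsilon_t \,=\, \varepsilon X_0 + \varepsilon\bigl(\beta(X_{\varepsilon^{-2}t})-\beta(X_0)\bigr) + \varepsilon\bigl(M^X_{\varepsilon^{-2}t}-\tilde M_{\varepsilon^{-2}t}\bigr)\,.
\end{equation*}
Boundedness of $\beta$ annihilates the middle term uniformly on compacts in probability, so the asymptotics of $\tilde X^\varepsilon$ coincide with those of the rescaled martingale $\varepsilon M_{\varepsilon^{-2}\cdot}$, where $M:=M^X-\tilde M$.

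Finally, the modified second characteristic of $\varepsilon M_{\varepsilon^{-2}\cdot}$ equals $\int_0^t G(X_{\varepsilon^{-2}s})\,\D s$, where $G(x)$ is a $d\times d$ matrix-valued function built from $c(x)$, $\nu(x,\D y)$ and the increments $\beta(\cdot+y)-\beta(\cdot)$; a direct computation shows that its four contributions reproduce precisely the four terms defining $\Sigma$ in \cref{ET1.2}. Exponential ergodicity of $\{\Pi_\tau(X_t)\}_{t\ge 0}$, via \cref{eq:erg}, forces this time average to converge in probability to $t\int_{\mathbb{T}^d_\tau}G(x)\,\pi(\D x)=t\Sigma$; the drift characteristic of $\tilde X^\varepsilon$ vanishes identically by construction, and the Lindeberg-type big-jump condition of \cite[Theorem VIII.2.17]{jacod} follows from (\textbf{C3}) under the rescaling $y\mapsto\varepsilon y$. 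The main technical obstacle is cases (2)--(3): when $c\equiv 0$ and $\beta$ lacks two bounded derivatives, the classical Itô formula is inapplicable, and one must justify a Meyer-type pathwise formula for pure-jump semimartingales with $\beta\in C_b^{\varphi\psi}$ or $\beta\in C_b^1$, using precisely \cref{eq:it}--\cref{eq:it2} to ensure that the compensated jump integral against the first-order Taylor remainder of $\beta$ is absolutely summable.
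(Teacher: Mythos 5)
Your overall architecture for part (b) is the one the paper follows: rewrite the divergent drift via the corrector $\beta$, apply the (generalized) It\^{o} formula of \Cref{Ito}, show the first characteristic vanishes after centering, and deduce convergence of the remaining (modified) characteristics via the exponential ergodicity of \Cref{SS2.1} together with \cite[Theorem VIII.2.17]{jacod}. That matches the paper.

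Part (a), however, has a genuine gap. You apply (\textbf{C4})(ii) for each $\lambda>0$ to obtain $u_\lambda\in C_b^{\varphi\psi}(\R^d)$, and then pass to the limit $\lambda\downarrow0$. Two problems arise. First, (\textbf{C4})(ii) gives only \emph{qualitative} membership $u_{\lambda,f}\in C_b^{\varphi\psi}$ for each fixed $\lambda$; it does not give a bound on $\lVert u_\lambda\rVert_{\varphi\psi}$ that is uniform in $\lambda$ as $\lambda\downarrow0$. Consequently the claimed equicontinuity ``from (\textbf{C4})(ii)'' is not justified, and even though $u_\lambda\to R^\tau(b^*_\tau-\bar b^*_\tau)$ uniformly (so Arzel\`a--Ascoli is not really needed for existence of the limit), you have no control ensuring the limit lies in $C_b^{\varphi\psi}$. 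Second, and symptomatically, your argument never uses (\textbf{C4})(i) — yet that hypothesis is precisely what the paper needs to establish $C^\psi$-regularity of the zero-resolvent. The paper avoids the $\lambda\downarrow0$ limit entirely: it first shows, using (\textbf{C4})(i) and the split $\int_0^{t_0}+\int_{t_0}^\infty$, that $R^\tau f_\tau\in C^\psi(\mathbb{T}_\tau^d)$ for every $\pi$-centered $f_\tau\in C(\mathbb{T}_\tau^d)$, then invokes the resolvent identity
$R^\tau g\,=\,R^\tau_\lambda\bigl(g+\lambda R^\tau g\bigr)$ at a single \emph{fixed} $\lambda>0$. With $g=b^*_\tau-\bar b^*_\tau$, both $g$ and $R^\tau g$ are $\pi$-centered $C^\psi$ functions, so (\textbf{C4})(ii) applies once to give $R^\tau g\in C^{\varphi\psi}(\mathbb{T}_\tau^d)$, i.e. $\beta=-R^\tau(b^*_\tau-\bar b^*_\tau)\in C_b^{\varphi\psi}(\R^d)$. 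Your limiting argument would work only if (\textbf{C4})(ii) were strengthened to a $\lambda$-uniform Schauder-type estimate; as stated, it isn't, and the fixed-$\lambda$ resolvent-identity bootstrap (together with (\textbf{C4})(i)) is the step you are missing.
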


 Under (\textbf{C1}), (\textbf{C2}) and the assumption that $b^*\in C_b(\R^d)$, in \Cref{lm:cont} below we show that \cref{e:po-1}
   admits a $\tau$-periodic solution $\beta\in C_b(\R^d)$ (which is also unique in the class of continuous $\tau$-periodic solutions satisfying
   $\int_{\mathbb{T}_\tau^d}\beta_\tau(x)\,\pi(\D x)=0$). However, we require
  additional smoothness of $\beta(x)$ in  order to apply It\^{o}'s formula (given in  \Cref{Ito}) in the proof of \Cref{T1.1} (see \Cref{S2} for details).
  This additional regularity is given through
 (\textbf{C4}) (together with (\textbf{C1}) and (\textbf{C2})). Namely, under these assumptions,  we show that  $\beta\in C_b^{\varphi\psi}(\R^d)$.
 When $c(x)\not\equiv0$ we require $\beta\in C_b^2(\R^d)$, and when $c(x)\equiv0$ and $b(x)\not\equiv0$  or $\nu(x,\D y)$ is non-symmetric for some $x\in\R^d$ we only require $m_{\varphi\psi}>1$
or $\beta\in C_b^1(\R^d)$.
 When $b(x)\equiv0$ and $\nu(x,\D y)$ is symmetric for all $x\in\R^d$,  as already commented,  $\beta(x)\equiv0$ and the assertion of the theorem follows without assuming (\textbf{C4}).
 In the  pure-jump case (that is, when $c(x)\equiv0$), \cref{eq:it} suggests that
   the H\"{o}lder exponent
   $\varphi(r)$ depends on the behavior  of $\nu(x,\D y)$ on $B_1(0)$.
   For example,
   when \begin{equation}\label{ER1.3}\frac{\underline\kappa}{|y|^{d}
   \varphi(|y|)}\Ind_{B_1(0)}(y)\,\D y\,\le\,\Ind_{B_1(0)}(y)\,\nu(x,\D y)\,\le\, \frac{\overline\kappa}{|y|^{d}
   \varphi(|y|)}\Ind_{B_1(0)}(y)\,\D y\,,\end{equation}
     for  some
   $0<\underline\kappa\le\overline{\kappa}<\infty$, \cref{eq:it} trivially holds true. Thus, we only require that $\beta\in C_b^{
   \varphi\psi}(\R^d)$ for some  H\"{o}lder exponent $\psi(r)$ with $m_{\varphi\psi}>1$.
Analogously, if \cref{eq:it2}  holds true,
then we only require that $\beta\in C_b^{
		1}(\R^d)$.
Observe that in the pure-jump case we do not require explicitly that
$\beta\in C_b^2(\R^d)$.
 In this sense the  assumption $u_{\lambda,f}\in C_b^{\varphi\psi}(\R^d)$ in ({\bf C4})(ii) is optimal. Namely, under \cref{eq:it}
(resp.\ \cref{eq:it2}), in \Cref{Ito} we show that when $m_{\varphi\psi}>1$ (resp.\ $\beta\in C_b^1(\R^d))$ we can apply It\^{o}'s formula to the process $\{\beta(X_t)\}_{t\ge0}$.

	Let us also remark that
		in the  proof of \Cref{T1.1} (a) we show that ({\bf C4})(i) (together with ({\bf C1})-({\bf C3})) implies that $\beta\in C_b^{\psi}(\R^d).$  Hence, \cref{ET1.1} holds true if $\psi(r)$ is such that either (1), (2) (with $\varphi(r)\equiv 1$) or (3) above is satisfied.  If this is not the case, then we require an additional regularity of $\beta(x)$ (inherited from the semigroup) which is given through ({\bf C4})(ii).

 Several examples of LTPs satisfying ({\bf C4}) (and ({\bf C1})-({\bf C3})) are presented in \Cref{examples}. In particular, if $\process{X}$ is a diffusion process with  $\tau$-periodic coefficients $b\in C_b^\varepsilon(\R^d)$ and $c\in C_b^{1+\varepsilon}(\R^d)$ for some $\varepsilon\in(0,1)$, and additionally $c(x)$ being positive definite,  (\textbf{C4}) with $\psi(r)=r^\varepsilon$ and $\varphi(r)=r^2$   follows from  \cite[Theorem 2.1]{tomisaki}.
In particular,   \Cref{T1.1} generalizes the results  from
		\cite{benso-lions-book,bhat2} where periodic homogenization of a diffusion process with $\tau$-periodic coefficients $b\in C_b^1(\R^d)$ and  $c\in C_b^{2}(\R^d)$, and $c(x)$ being positive definite, has been considered.

 If $\process{X}$ is a LTP with diffusion and drift coefficients as above, and L\'evy kernel satisfying \cref{ER1.3} (and some additional regularity properties discussed in \Cref{examples}), then (\textbf{C4}) with $\psi(r)=r^\varepsilon$  and $\varphi(r)=r^2$ follows again from \cite[Theorem 2.1]{tomisaki}.

 If $\process{X}$ is a pure-jump LTP with vanishing drift term and L\'evy kernel satisfying \cref{ER1.3} (and some additional regularity properties discussed in \Cref{examples}),   (\textbf{C4}) holds true for any H\"{o}lder exponent $\psi(r)$ such that $[m_\psi,M_\psi]\subset (0,1)$ and $[m_{\varphi\psi},M_{\varphi\psi}]\cap \N=\emptyset$  (see \Cref{examples}).

\subsection*{Literature Review}

Our work relates to the active research on homogenization of integro-differential operators, and Markov processes with jumps. The work  is highly motivated
by  the results in \cite{benso-lions-book,bhat2,hom} where, by employing   probabilistic techniques, the authors considered  periodic homogenization   of the operator $\mathcal{L}$ with $\nu(x,\D y)\equiv0$ (that is, second-order elliptic operator in non-divergence form), and   $\mathcal{L}$ with $b(x)\equiv0$ and $\nu(x,\D y)$ being symmetric for all $x\in\R^d$ (that is, integro-differential operator in the balanced form), respectively. In
this article, we generalize both results by including the non-local part of the operator $\mathcal{L}$, as well as non-symmetries caused by the drift term $b(x)$ and the L\'evy kernel $\nu(x,\D y)$.
In a closely related work
\cite{zhizhina}, by using analytic techniques (the corrector method), the authors discuss periodic homogenization  of the operator $\mathcal{L}$ with a convolution-type L\'evy kernel, that is, $\mathcal{L}$ is determined by
$$\nu(x,\D y)\,=\,\lambda(x)\mu(x+y)a(y)\,\D y\qquad \text{and}\qquad b(x)\,=\,\int_{B_1(0)}y\,\nu(x,\D y)$$ with
$\lambda(x)$ and $\mu(x)$ being measurable, $\tau$-periodic and such that $0<\underline\kappa\le\lambda(x),\mu(x)\le \overline{\kappa}<\infty$ for all $x\in\R^d$, and $a(y)\ge0$ being measurable and such that $0<\int_{\R^d}(1\vee|y|^2)a(y)\,\D y<\infty$ and $a(y)=a(-y)$ for all $y\in\R^d$. The homogenized operator is again a second-order elliptic operator with constant coefficients.  Observe that this case is not covered by \Cref{T1.1} since finiteness of  $\nu(x,\D y)$ excludes regularity properties of the corresponding semigroup assumed in ({\bf C4}).

There is a vast literature
on homogenization of differential operators, mostly based on PDE methods.
We refer the interested readers to  \cite{benso-lions-book,chechkin,donato,kozlov,tartarII} and the references therein.
Results related to the problem of periodic homogenization of non-local operators (based on probabilistic techniques) were obtained in \cite{franke-periodic,franke-periodicerata, franke2,fujiwara,horie-inuzuka-tanaka,huang-duan-song,huang-duan-song2,Tom}. In all this works the focus is on the so-called stable-like operators
(possibly with variable order),
 that is, on the case when  $\nu(x,\D y)$ admits ``large jumps'' of power-type: $$\frac{\underline\kappa}{|y|^{d+\overline \alpha}}\Ind_{B_1^c(0)}(y)\,\D y\,\le\,\Ind_{B_1^c(0)}(y)\,\nu(x,\D y)\,\le\,\frac{\overline{\kappa}}{|y|^{d+\underline\alpha}}\Ind_{B_1^c(0)}(y)\,\D y,\qquad x\in \R^d\,,$$ for some $0<\underline\kappa\le\overline{\kappa}<\infty$ and $0<\underline\alpha\le\overline\alpha<2$. In this case, by using subdiffusive scaling (which depends on the behavior of $\nu(x,\D y)$ on $B_1^c(0)$), the homogenized operator is the infinitesimal generator of a stable L\'evy process with the index of stability being equal to the power of the scaling factor. The problem of stochastic homogenization (that is, homogenization of operators with random coefficients) of this type of operators has been considered in \cite{rhodes}.
PDE and
other
 analytical approaches to the problem of
periodic homogenization and
stochastic homogenization of stable-like operators  can be found in \cite{ari2,ari,ari3,imbert,gosh,salort,focardi, kassmann,uemura,schwabI,schwabII}.

  Let us also remark that the class of processes considered in the present article constitute of both diffusion and pure-jump part, and the behavior of the homogenized process depends on both of them. This makes the approach to this problem more subtle since we need to take care of  diffusion processes, diffusion processes with jumps and pure jump processes, simultaneously.

\section{Proof of \Cref{T1.1}
}\label{S2}

Throughout
this section we assume that $\process{X}$ is a $d$-dimensional LTP with semigroup $\process{P}$, symbol $q(x,\xi)$ and L\'evy triplet $(b(x),c(x),\nu(x,\D y))$, satisfying (\textbf{C1})-(\textbf{C4}). A crucial step in the proof is an application of It\^{o}'s formula. In order to justify this step, we first discuss  regularity of a solution to the Poisson equation  \cref{e:po-1}.

\subsection*{Solution to the Poisson Equation \cref{e:po-1}}
 Observe first  that for any  $f_\tau\in B_b(\mathbb{T}_\tau^d)$ with $\int_{\mathbb{T}_\tau^d} f_\tau(x)\,\pi(\D x)=0,$  \Cref{SS2.1} implies that
 $$\|P^\tau_t f_\tau\|_\infty \,\le\ \Gamma \E^{-\gamma t} \|f_\tau\|_\infty\,,\qquad t\ge0\,.$$ In particular,
 $$\left\|\int_0^\infty P^\tau_t f_\tau \,\D t\right\|_\infty\,
 \le \,\frac{\Gamma}{\gamma}\lVert f_\tau\rVert_\infty \,<\,\infty\,.$$
 Therefore, the zero-resolvent
 $$R^\tau f_\tau(x)\,:=\,\int_0^\infty  P^\tau_t f_\tau(x) \,\D t\,,\qquad x\in\mathbb{T}_\tau^d\,,$$ is well defined, and
 $$\int_{\mathbb{T}_\tau^d}R^\tau f_\tau(x)\,\pi(\D x)\,=\,0\,.$$

According  to \cite[Corollary 3.4]{rene-conserv}, $\process{X}$ is a $C_b$-Feller process. Thus, $\{\Pi_\tau(X_t)\}_{t\ge0}$ is also $C_b$-Feller, and  $R^\tau f_\tau\in C(\mathbb{T}_\tau^d)$ for every $f_\tau\in C(\mathbb{T}_\tau^d)$ satisfying $\int_{\mathbb{T}^d_\tau}f_\tau(x)\,\pi(\D x)=0$.  Since $\mathbb{T}_\tau^d$ is compact, $\{\Pi_\tau(X_t)\}_{t\ge0}$ is a Feller process. Denote the corresponding Feller generator by $(\mathcal{A}_\tau^\infty,\mathcal{D}_{\mathcal{A}_\tau^\infty})$. Clearly,
 for any $f_\tau\in\mathcal{D}_{\mathcal{A}_\tau^\infty}$(which is by definition  continuous), and its $\tau$-periodic extension $f(x)$, it holds that $ f\in\mathcal{D}_{\mathcal{A}^b}$ and $\mathcal{A}_\tau^\infty f_\tau=\mathcal{A}^b f.$
  It is clear now that $R^\tau f_\tau\in \mathcal{D}_{\mathcal{A}^\infty_\tau}$ for any $f_\tau\in C(\mathbb{T}_\tau^d)$ with $\int_{\mathbb{T}_\tau^d} f_\tau(x)\,\pi(\D x)=0.$

 Now, we turn to the  Poisson equation
\cref{e:po-1}.
 Denote by $b^*_\tau(x)$ the restriction of $b^*(x)$ to $\mathbb{T}_\tau^d$, and set $\bar {b^*_\tau}=\int_{\mathbb{T}_\tau^d}b^*_\tau(x)\,\pi(\D x)$. By
 assumption $ b^*_\tau\in C^{\psi}(\mathbb{T}_\tau^d)$.
 Define now $\beta_\tau(x):=-R^\tau (b^*_\tau(\cdot)-\bar b^*_\tau)(x)$ for any $x\in\mathbb{T}_\tau^d$.
According to the argument above, we immediately get the following.
 \begin{lemma}\label{lm:cont} The $\tau$-periodic extension $\beta(x)$ of $\beta_\tau(x)$ is continuous  and satisfies  \cref{e:po-1}. Moreover, $\beta(x)$ is the  unique solution in the class of continuous and $\tau$-periodic solutions to \cref{e:po-1} satisfying
 $\int_{\mathbb{T}_\tau^d} \beta_\tau(x)\,\pi(\D x)=0$.\end{lemma}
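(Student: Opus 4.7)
The plan is to read off all three assertions directly from the construction $\beta_\tau = -R^\tau(b^*_\tau - \bar{b^*_\tau})$, using only the zero-resolvent computation together with the ergodicity estimate of \Cref{SS2.1}.

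First, continuity of $\beta$ is immediate. Since $b^*_\tau \in C^\psi(\mathbb{T}_\tau^d) \subseteq C(\mathbb{T}_\tau^d)$ and $b^*_\tau - \bar{b^*_\tau}$ has $\pi$-mean zero, the remark preceding the lemma already yields $\beta_\tau = -R^\tau(b^*_\tau - \bar{b^*_\tau}) \in \mathcal{D}_{\mathcal{A}_\tau^\infty} \subseteq C(\mathbb{T}_\tau^d)$, so the $\tau$-periodic extension $\beta$ is continuous, belongs to $\mathcal{D}_{\mathcal{A}^b}$, and $\mathcal{A}^b\beta$ is the $\tau$-periodic extension of $\mathcal{A}_\tau^\infty\beta_\tau$. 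The mean-zero condition $\int_{\mathbb{T}_\tau^d}\beta_\tau(x)\,\pi(\D x)=0$ is also recorded above as the identity $\int_{\mathbb{T}_\tau^d} R^\tau f_\tau \,\D \pi = 0$.

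To verify the Poisson equation, I would establish the standard zero-resolvent identity $\mathcal{A}_\tau^\infty R^\tau f_\tau = -f_\tau$ for every continuous $f_\tau$ on $\mathbb{T}_\tau^d$ with $\int f_\tau\,\D\pi = 0$. Using the semigroup property and Fubini (justified by the exponential decay $\|P_t^\tau f_\tau\|_\infty \le \Gamma e^{-\gamma t}\|f_\tau\|_\infty$),
$$\frac{P_s^\tau R^\tau f_\tau - R^\tau f_\tau}{s}\,=\,\frac{1}{s}\left(\int_s^\infty P_u^\tau f_\tau\,\D u - \int_0^\infty P_u^\tau f_\tau\,\D u\right)\,=\,-\frac{1}{s}\int_0^s P_u^\tau f_\tau\,\D u\,.$$
Since $\mathbb{T}_\tau^d$ is compact we have $C(\mathbb{T}_\tau^d)=C_\infty(\mathbb{T}_\tau^d)$, so strong continuity of the Feller semigroup $\process{P^\tau}$ gives convergence of the right-hand side to $-f_\tau$ in $\|\cdot\|_\infty$ as $s\downto 0$. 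Applying this with $f_\tau = b^*_\tau - \bar{b^*_\tau}$ yields $\mathcal{A}_\tau^\infty \beta_\tau = b^*_\tau - \bar{b^*_\tau}$, and passing to the $\tau$-periodic extension gives \cref{e:po-1}.

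For uniqueness, suppose $\tilde\beta$ is another continuous $\tau$-periodic solution with zero $\pi$-mean. Then $g := \tilde\beta - \beta$ is continuous, $\tau$-periodic, $\pi$-mean zero, and satisfies $\mathcal{A}^b g = 0$; passing to the torus, $g_\tau \in \mathcal{D}_{\mathcal{A}_\tau^\infty}$ with $\mathcal{A}_\tau^\infty g_\tau = 0$. The identity $P_t^\tau g_\tau - g_\tau = \int_0^t P_s^\tau \mathcal{A}_\tau^\infty g_\tau\,\D s = 0$ gives $P_t^\tau g_\tau = g_\tau$ for all $t\ge0$. On the other hand, the total variation bound \cref{eq:erg} entails
$$\left|P_t^\tau g_\tau(x) - \int_{\mathbb{T}_\tau^d} g_\tau\,\D\pi\right|\,\le\,\|g_\tau\|_\infty\,\Gamma\E^{-\gamma t}\,\longrightarrow\,0$$
uniformly in $x$, so $g_\tau \equiv \int g_\tau\,\D\pi = 0$.

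The only mildly subtle step is the justification of the zero-resolvent identity, which hinges on uniform-in-$x$ continuity of $t \mapsto P_t^\tau f_\tau$ at $t=0$; this is where compactness of $\mathbb{T}_\tau^d$ (and hence the equality $C(\mathbb{T}_\tau^d) = C_\infty(\mathbb{T}_\tau^d)$) is essential. Every other step is a direct consequence of the exponential ergodicity stated in \Cref{SS2.1}.
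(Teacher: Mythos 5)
Your proposal is correct and follows essentially the same route as the paper's own argument: the paper treats existence and continuity as immediate from the zero-resolvent construction described in the preceding paragraphs (exactly the resolvent identity you spell out), and for uniqueness it invokes \cite[Proposition 4.1.7]{ethier} to pass to the invariance identity $P_t^\tau g_\tau = g_\tau$ — which you instead derive directly via Dynkin's formula — and then applies the exponential ergodicity of \Cref{SS2.1}, just as you do.
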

\begin{proof}We only need to prove uniqueness.
	Let $\bar \beta(x)$ be another continuous and $\tau$-periodic solution to \cref{e:po-1} satisfying
 $\int_{\mathbb{T}_\tau^d} \bar\beta_\tau(x)\,\pi(\D x)=0$. Then, $\mathcal{A}^b(\beta-\bar{\beta})(x)\equiv0$. In particular, according to \cite[Proposition 4.1.7]{ethier},
	$$(\beta-\bar{\beta})(x)\,=\,\mathbb{E}_x\bigl[(\beta-\bar{\beta})(X_t)\bigr]\,=\,\mathbb{E}^\tau_{x_\tau}\bigl[(\beta_\tau-\bar{\beta}_\tau)(\Pi_{\tau}(X_t))\bigr]\,,\qquad x\in\R^d\,,\ t\ge0\,.$$ By letting now $t\to\infty$, it follows from \Cref{SS2.1} that $(\beta-\bar{\beta})(x)\equiv0$, which proves the assertion.
	\end{proof}

Observe that in \Cref{lm:cont} we only used the fact that $b^*_\tau\in C(\mathbb{T}_\tau^d)$.
In the sequel, we discuss additional smoothness of $\beta(x)$.

\subsection*{Proof of  \Cref{T1.1} (a)}
We first claim that for any $\tau$-periodic $f\in C_b(\R^d)$ such that $\int_{\mathbb{T}_\tau^d}f_\tau(x)\,\pi(\D x)=0$, $R^\tau f_\tau\in C^\psi(\mathbb{T}_\tau^d)$. Indeed, by  ({\bf C4})(i), we have
$$\int_0^{t_0} \|P_t^\tau f_\tau\|_{\psi}\,\D t  \,\le\ \|f_\tau\|_\infty\int_0^{t_0} C(t)\,\D t \,<\,\infty\,.$$ Also,
since for any $t>0$ and any $f_\tau\in C(\mathbb{T}_\tau^d)$ with $\int_{\mathbb{T}_\tau^d}f_\tau(x)\,\pi(\D x)=0$, $P^\tau_tf_\tau\in C(\mathbb{T}_\tau^d)$ and $\int_{\mathbb{T}_\tau^d}P_t^\tau f_\tau(x)\,\pi(\D x)=0$,  ({\bf C4})(i) and \Cref{SS2.1} imply that
$$\int_{t_0}^\infty \|P_t^\tau f_\tau\|_{\psi}\,\D t \,\le\, C(t_0)\int_{t_0}^\infty \|P^\tau_{t-t_0}f_\tau\|_\infty\,\D t\,\le\, \Gamma C(t_0)\|f_\tau\|_\infty\int_{t_0}^\infty \E^{-\lambda (t-t_0)}\,\D t \,<\,\infty\,.$$ Combining both estimates above with the fact that $R^\tau f_\tau=\int_0^\infty P_t^\tau f_\tau\,\D t$, we get
$R^\tau f_\tau\in C^\psi(\mathbb{T}_\tau^d)$.

Finally, for $\lambda> 0$ let $R_\lambda^\tau $ be the $\lambda$-resolvent of $\{\Pi_\tau(X_t)\}_{t\ge0}$.
  Clearly, for  any $\tau$-periodic $f\in C_b^\psi(\R^d)$ satisfying $\int_{\mathbb{T}_\tau^d}f_\tau(x)\,\pi(\D x)=0$,
 the $\tau$-periodic extension of $R_\lambda^\tau  f_\tau(x)$ (say $\bar u_{\lambda,f}(x)$) is a solution to   \cref{e:po-2}. Observe  next that necessarily $u_{\lambda,f}(x)=\bar u_{\lambda,f}(x)$ for all $x\in\R^d$. Namely, since $\bar u_{\lambda,f}\in C_b(\R^d)$,  and by \eqref{e:po-2},
 \begin{align*} u_{\lambda,f}(x)-\bar u_{\lambda,f}(x)&\,=\, \E^{-\lambda t}\,\mathbb{E}_x\bigl[(u_{\lambda,f}-\bar u_{\lambda,f})(X_t)\bigr]\\&\ \ \ \ \ -\int_0^t \E^{-\lambda s}\,\mathbb{E}_x\bigl[\mathcal{A}^b(u_{\lambda,f}-\bar u_{\lambda,f})(X_s)-\lambda(u_{\lambda,f}-\bar u_{\lambda,f})(X_s)\bigr]\,\D s\\
&\,=\, \E^{-\lambda t}\,\mathbb{E}_x\bigl[(u_{\lambda,f}-\bar u_{\lambda,f})(X_t)\bigr]\,,\qquad x\in\R^d\,,\ t\ge0\,,\end{align*}
 by letting $t\to\infty$ the assertion follows.
Thus, since $b^*_\tau\in C_b^\psi(\mathbb{T}_\tau^d)$,
from the  resolvent identity
\begin{equation*}\label{e:res}R^\tau (b^*_\tau(\cdot)-\bar{b}^*_\tau)(x)\,=\,R_\lambda^\tau((b^*_\tau(\cdot)-\bar{b}^*_\tau)+\lambda R^\tau (b^*_\tau(\cdot)-\bar{b}^*_\tau))(x)\end{equation*}
 and ({\bf C4})(ii), we conclude the result. \hfill\(\Box\)

\bigskip

In \Cref{T1.1} (b)  we require that $\beta\in C_b^2(\R^d)$
if $c(x)\not\equiv 0$, and that $m_{\varphi\psi}>1$  (resp.  $\beta\in C_b^1(\R^d)$) if $c(x)\equiv0$  and \cref{eq:it} (resp. \cref{eq:it2}) holds true.
In the following proposition we slightly generalize \cite[Lemma 4.2]{priola} (see also \cite{errami}), and prove It\^{o}'s formula for a pure-jump LTP with respect to a not necessarily twice continuously differentiable function.

\begin{proposition}\label{Ito}
	Assume that $\process{X}$ is pure-jump
$($that is, $c(x)\equiv0$$)$ and that
there is a H\"{o}lder exponent $\phi(r)$ with $m_\phi>1$ such that
\begin{equation}
	\label{delta}
	\sup_{x\in\R^d}\int_{B_1(0)}
\phi(|y|)\,\nu(x,\D y)\,<\,\infty.\end{equation}
Then, it holds that \begin{align*}f(X_t)
	\,=\,& f(X_0)+\int_0^t \bigl\langle\nabla f(X_{s
}),b^*(X_{s
})\bigr\rangle\,\D s\\&+ \int_0^t\int_{\R^d}\left(f(X_{s-}+k(X_{s-},z))-f(X_{s-})\right)\bigl(\tilde
\mu(\cdot, \D z,\D s)- \tilde\nu(\D z)\, \D s\bigr)
	\\
	& + \int_0^t\int_{\R^d}\Big(f(X_{s}+k(X_s,z))-f(X_{s})  -\langle\nabla f(X_{s}), k(X_s,z)\rangle\Big)
\tilde\nu(\D z)\,\D s
\,,\end{align*}  for all $t\ge0$ and all $f\in C_b^{\phi}(\R^d)$,
where $b^*(x)=b(x)+\int_{B_1^c(0)}y\,\nu(x,\D y)$.
In addition, if \cref{eq:it2} holds true, then the above relation holds for any $f\in C_b^1(\R^d)$.
	\end{proposition}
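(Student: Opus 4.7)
The plan is to reduce to the classical case by mollification. For $f\in C_b^{\phi}(\R^d)$ with $m_\phi>1$, let $(\rho_n)_{n\ge1}\subset C_c^\infty(\R^d)$ be a standard nonnegative mollifier with $\supp\rho_n\subset B_{1/n}(0)$ and $\int\rho_n\,\D y=1$, and set $f_n:=f*\rho_n\in C_b^\infty(\R^d)$. Since convolution with a probability density is an averaging operator, $[D^m f_n]_{-k,\phi}\le[D^m f]_{-k,\phi}$, so $\|f_n\|_\phi\le\|f\|_\phi$ uniformly in $n$; as $m_\phi>1$ forces $f\in C_b^1(\R^d)$, we also have $f_n\to f$ and $\nabla f_n\to\nabla f$ uniformly. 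For each $f_n$, the classical It\^{o} formula for semimartingales, applied to \cref{SDE} with $c(x)\equiv0$ and with the large-jump drift absorbed into $b^*(x)=b(x)+\int_{B_1^c(0)}y\,\nu(x,\D y)$, yields the claimed identity with $f_n$ in place of $f$.

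The crux is to pass to the limit $n\to\infty$ under the uniform second-order Taylor-type bound
\begin{equation*}
|f_n(x+h)-f_n(x)-\langle\nabla f_n(x),h\rangle|\,\le\,C\,\phi(|h|)\,,\qquad x\in\R^d\,,\ |h|\le 1\,,
\end{equation*}
with $C$ independent of $n$. For $m_\phi\in(1,2]$ this follows by writing the left-hand side as $\int_0^1\langle\nabla f_n(x+sh)-\nabla f_n(x),h\rangle\,\D s$, invoking the H\"{o}lder-type estimate $|\nabla f_n(x+sh)-\nabla f_n(x)|\le C\phi(s|h|)/(s|h|)$ (inherited from $f$ with a constant independent of $n$), and using the almost-increasing property $\phi(sr)\le Cs^\alpha\phi(r)$ for some $\alpha>1$ to integrate in $s$; the case $m_\phi>2$ is analogous via a higher-order Taylor expansion. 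The same bound holds for $f$ itself and will be used to identify the limit.

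With these uniform controls, each term in the It\^{o} identity converges as $n\to\infty$. The left-hand side and the drift integral are handled by uniform convergence of $f_n$ and $\nabla f_n$ together with boundedness of $b^*$. The compensated Poisson martingale converges in $L^2$ via It\^{o}'s isometry, since the integrand there is dominated by $C(|k(x,z)|^2\wedge 1)$ uniformly in $n$ (using the uniform Lipschitz estimate on $\{f_n\}$), which is $\tilde\nu\otimes\D s$-integrable by (\textbf{C3}). The compensator-drift correction is handled by dominated convergence with an integrand bounded by $C\phi(|k|)\Ind_{\{|k|\le 1\}}+C(1+|k|)\Ind_{\{|k|>1\}}$, integrable by \cref{delta} combined with (\textbf{C3}). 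The main technical obstacle is precisely this matching: the H\"{o}lder scale $\phi(r)$ controlling the second-order Taylor remainder must be reconciled with the uniform jump-integrability \cref{delta}, which is the raison d'\^{e}tre of the hypothesis $m_\phi>1$. Finally, for the alternative statement with $f\in C_b^1(\R^d)$ under \cref{eq:it2}, the same mollification scheme applies with the cruder bound $|f(x+h)-f(x)-\langle\nabla f(x),h\rangle|\le 2\|\nabla f\|_\infty|h|$, whose $\tilde\nu$-integrability on $\{|k|\le 1\}$ is supplied by \cref{eq:it2}.
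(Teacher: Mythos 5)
Your argument follows the same route as the paper's proof: mollify $f$ to obtain $f_n\in C_b^\infty(\R^d)$ with $\|f_n\|_\phi\le\|f\|_\phi$, apply the classical It\^{o} formula to $f_n$ via the SDE representation \cref{SDE}, and pass to the limit using (i) the uniform Taylor-type bound $|f_n(x+y)-f_n(x)-\langle\nabla f_n(x),y\rangle|\lesssim\phi(|y|)\Ind_{B_1(0)}(y)+|y|\Ind_{B_1^c(0)}(y)$ combined with \cref{delta} and (\textbf{C3}) for the compensator-drift correction, and (ii) It\^{o}'s isometry with an integrand dominated by $C|k|^2$ for the compensated Poisson term. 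This is essentially the paper's proof. One small imprecision: for $m_\phi>2$ the paper's reduction is not a "higher-order Taylor expansion" but the observation that then $C_b^\phi(\R^d)\subseteq C_b^2(\R^d)$, so the classical $C^2$ It\^{o} formula with (\textbf{C3}) already suffices (hence the paper's ``WLOG $m_\phi\in(1,2]$''); this doesn't affect the rest of your argument.
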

\begin{proof} Without loss of generality, assume that $m_\phi\in(1,2]$.
	We follow the proof of \cite[Lemma 4.2]{priola}. Let $f\in C_b^{\phi}(\R^d)$, and let $\chi\in C_c^\infty(\R^d)$, $0\le\chi\le1$, be such that $\int_{\R^d}\chi(x)\,\D x=1$. For $n\in\N$ define $\chi_n(x):=n^d\chi(nx)$, and $f_n(x):=(\chi_n\ast f)(x)$, where $\ast$ stands for the standard convolution operator. Clearly, $\{f_n\}_{n\in\N}\subset
C_b^\infty(\R^d)$, $\lVert f_n\rVert_{\phi}\le\lVert f\rVert_{\phi}$ for $n\in\N$,
$\lim_{n\to\infty}f_n(x)=f(x)$
and $\lim_{n\to\infty}\nabla f_n(x)=\nabla f(x)$ for all $x\in\R^d$.  Next, by employing It\^{o}'s formula and \cref{SDE} we have that
\begin{align}\label{e:sde}
	f_n(X_t)
	\,=\,& f_n(X_0)+\int_0^t \bigl\langle\nabla f_n(X_{s
}),b
(X_{s
})\bigr\rangle\,\D s\nonumber\\&
+\int_0^{t}\int_{\R}\langle \nabla f_n(X_{s}), k(X_{s
},z)\rangle \Ind_{\{u:|k(X_{s
},u)|\ge1\}}(z)\,\tilde{\nu}(\D z)\,\D s\nonumber\\
&+ \int_0^t\int_{\R}\left(f_n(X_{s-}+k(X_{s-},z))-f_n(X_{s-})\right)\bigl(\tilde\mu(\cdot,\D z,\D s)-\tilde \nu(\D z)\,\D s\bigr)\nonumber
	\\
	& +\int_0^t\int_{\R}\Big(f_n(X_{s}+k(X_{s},z))-f_n(X_{s})  -\langle\nabla f_n(X_{s}), k(X_{s},z)\rangle\Big)\,\tilde \nu(\D z)\,\D s\nonumber\\
\,=\,&f_n(X_0)+\int_0^t \bigl\langle\nabla f_n(X_{s
}),b^*
(X_{s
})\bigr\rangle\,\D s\nonumber\\
&+ \int_0^t\int_{\R}\left(f_n(X_{s-}+k(X_{s-},z))-f_n(X_{s-})\right)\bigl(\tilde\mu(\cdot,\D z,\D s)-\tilde \nu(\D z)\,\D s\bigr)\\
&+\int_0^t\int_{\R^d}\Big(f_n(X_{s}+k(X_s,z))-f_n(X_{s})  -\langle\nabla f_n(X_{s}), k(X_s,z)\rangle\Big)\, \tilde\nu(\D z)\,\D s.\nonumber
	\end{align}
Now, by letting $n\to\infty$ we see that the left hand-side converges to $f(X_t)$, and the first two terms on the right-hand side (for the second term we employ the dominated convergence theorem) converge to $f(X_0)$ and $\int_0^t \bigl\langle\nabla f
(X_{s-}),b^*(X_{s-})\bigr\rangle\,\D s$, respectively. Further,  Taylor's theorem
together with the fact that $m_\phi>1$ implies
 \begin{align*}&|f_n(x+y)-f_n(x)  -\langle\nabla f_n(x), y\rangle|\\&\,\le\,\int_0^1|\nabla f_n(x+ry)-\nabla f_n(x)||y|\,\D r\\&\,\le\,\lVert f_n\rVert_{\phi}\phi(|y|)\Ind_{B_1(0)}(y)+2\lVert f_n\rVert_{\phi}|y|\Ind_{B^c_1(0)}(y)
\\&\,\le\,\lVert f\rVert_{\phi}\phi(|y|)\Ind_{B_1(0)}(y)+2\lVert f\rVert_{\phi}|y|\Ind_{B^c_1(0)}(y)\,.\end{align*} Observe that according to \cref{delta} and ({\bf C3}), $$\sup_{x\in \R^d}\int_{\R^d}\left(\phi(|y|)\Ind_{B_1(0)}(y)+|y|\Ind_{B_1^c(0)}(y)\right)\,\nu(x,\D y)<\infty\,.$$ Thus, the dominated convergence theorem implies that the last term converges to $$\int_0^t\int_{\R^d}\Big(f(X_{s}+k(X_s,z))-f(X_{s})  -\langle\nabla f(X_{s}), k(X_s,z)\rangle\Big)
\tilde\nu(\D z)\,\D s\,.$$ Finally, by employing the isometry formula,
we have
\begin{align*}
&
\,\widetilde{\mathbb{E}}_x\bigg[\Big(\int_0^t\int_{\R}\left(f_n(X_{s-}+k(X_{s-},z))-f_n(X_{s-})-f(X_{s-}+k(X_{s-},z))+f(X_{s-})\right)\\&\hspace{9.8cm}\bigl(\tilde{\mu}(\cdot,\D z,\D s)- \tilde{\nu}(\D z) \,\D s\bigr)\Big)^2\bigg]\\
&\,=\,\widetilde{\mathbb{E}}_x\bigg[\int_0^t\int_{\R}\left(f_n(X_{s}+k(X_{s},z))-f_n(X_{s})-f(X_{s-}+k(X_{s},z))+f(X_{s})\right)^2\\&\hspace{13.05cm}\tilde{\nu}(\D z)\,\D s\bigg]
\,.
\end{align*}
Now, since \begin{align*}&|f_n(x+y)-f_n(x)-f(x+y)+f(x)|^2\\&\,\le\,
\left(\int_0^1\left(|\nabla f_n(x+ry)|+|\nabla f(x+ry)|\right)|y|\,\D r\right)^2\\
&\,\le\,4\lVert f\rVert_{1
}^2|y|^2\,,
\end{align*}
 the dominated convergence theorem  implies that, possibly passing to a subsequence, the third term on the left-hand side in \cref{e:sde} converges to $$\int_0^t\int_{\R^d}\left(f(X_{s-}+k(X_{s-},z))-f(X_{s-})\right)\bigl(\tilde\mu(\cdot,\D z,\D s)-\tilde\nu(\D z)\,\D s\bigr)\,,$$ which proves the desired result.

Finally, by following the above arguments, one easily sees that under \cref{eq:it2} the  assertion also holds   for any $f\in C_b^1(\R^d)$, which concludes the proof.
	\end{proof}

\subsection*{Proof of  \Cref{T1.1} (b)}
We now prove the main result of this article. We follow the approach from \cite{franke-periodic}.
Let $\beta\in C_b^{\varphi\psi}(\R^d)$ be a $\tau$-periodic solution to \cref{e:po-1} discussed above.
Recall that
either $\beta\in C_b^2(\R^d)$ if $c(x)\not\equiv0$,
or $m_{\varphi\psi}>1$
(resp.\ $\beta\in C_b^1(\R^d)$) if $c(x)\equiv0$ and \cref{eq:it}
(resp.\ \cref{eq:it2})
holds true,
as assumed in \Cref{T1.1} (b) (1), (2) and (3).
According to \Cref{Ito},  we can apply It\^{o}'s formula to the process $\{\beta(X_t)\}_{t\ge0}$.
Let us consider now the process $\{X_t-\bar {b^*}t-\beta(X_t)+\beta(X_0)\}_{t\ge0}$.  By combining \cref{SDE} and It\^{o}'s formula  we have that
\begin{align*}
&X_t-\bar {b^*} t-\beta(X_t)+\beta(X_0)\\&\,=\,  x+\int_0^{t}\left(b(X_{s
})+ \int_{B_1^c(0)}y\,\nu(X_{s
},\D y)- \bar {b^*}\right)\D s+\int_0^{t}\tilde\sigma(X_{s
})\,\D \tilde{W}_s\\&\ \ \ \ \ \ \, +\int_0^{t}\int_{\R}k(X_{s-
},z)\left(\tilde{\mu}(\cdot,
\D z , \D s)-\tilde{\nu}(\D z)\,\D s\right)\\
&\ \ \ \ \, \ \ -\int_0^t \bigl\langle\nabla \beta(X_{s
}),b(X_{s
})\bigr\rangle\,\D s-\int_0^t \bigl\langle\nabla \beta(X_{s
}),\tilde\sigma(X_{s
})\,\D \tilde W_s\bigr\rangle\\
&\ \ \ \ \ \ \, -\frac{1}{2}\sum_{i,j=1}^d\int_0^tc_{ij}(X_{s
})\partial_{ij}\beta(X_{s
})\ \D s\\
&\ \ \ \ \ \ \, - \int_0^t\int_{\R}\left(\beta(X_{s-}+k(X_{s-},z))-\beta(X_{s-})\right)\bigl(\tilde{\mu}(\cdot,\D z,\D s)-\tilde{\nu}(\D z)\,\D s\bigr)
\\
&\ \ \ \ \ \ \, - \int_0^t\int_{\R}\Big(\beta(X_{s}+k(X_{s},z))-\beta(X_{s})\\
&\qquad \qquad \qquad\quad  -\langle\nabla \beta(X_{s}),k(X_{s},z)\rangle\Ind_{\{u:|k(X_{s},u)|< 1\}}(z)\Big)\,\tilde{\nu}(\D z)\, \D s\\
&\,=\, x+\int_0^{t}\tilde\sigma(X_{s})\,\D \tilde{W}_s-\int_0^t \bigl\langle\nabla \beta(X_{s}),\tilde\sigma(X_{s})\,\D \tilde W_s\bigr\rangle \\
&\ \ \ \ \ \ \, +\int_0^{t}\int_{\R}k(X_{s-},z)\left(\tilde{\mu}(\cdot,\D z,\D s)-\tilde{\nu}(\D z)\, \D s\right)\\
&\ \ \ \ \ \ \,- \int_0^t\int_{\R}\left(\beta(X_{s-}+k(X_{s-},z))-\beta(X_{s-})\right)\bigl(\tilde{\mu}(\cdot,\D z,\D s)-\tilde{\nu}(\D z)\, \D s\bigr) \,,
\end{align*}
where we used
the fact
that $b^*(X_t)-\bar {b^*}=\mathcal{A}^b\beta(X_t)$ for any $t\ge0$.
	
	Clearly, $\{X_t-\bar {b^*} t-\beta(X_t)+\beta(X_0)\}_{t\ge0}$ is a special semimartingale, and from \cite[Proposition II.2.29]{jacod} we see again that for the truncation function we can take $h(x)=x$. Thus, the first characteristic of  $\{X_t-\bar {b^*} t-\beta(X_t)+\beta(X_0)\}_{t\ge0}$ vanishes (that is, $B_t\equiv0$, $t\ge0$), the third
	characteristic equals
	to
	\begin{align*}C^{ij}_t&\,=\,\int_0^t \Bigg(c_{ij}(X_s)+ \sum_{k,l=1}^dc_{kl}(X_s)\partial_k\beta_i(X_s)\partial_l\beta_j(X_s)\\&\hspace{1.6cm}-\sum_{l=1}^dc_{il}(X_s)\partial_l\beta_j(X_s)-\sum_{k=1}^dc_{kj}(X_s)\partial_k\beta_i(X_s)\Bigg)\D s\\
	&\,=\, \int_0^t\sum_{k,l=1}^{d}\left(\delta_{ki}-\partial_k\beta_i(X_s)\right)c_{kl}(X_s)\left(\delta_{lj}-\partial_l\beta_j(X_s)\right)\D s\,,\end{align*} for $t\ge0$ and $i,j=1,\dots,d,$ and the third modified characteristic reads
	\begin{align*}\widetilde{C}^{ij}_t\,=\,&C^{ij}_t+\int_0^t\int_{\R^d}y_iy_j\,\nu(X_s,\D y)\,\D s\\&+\int_0^t\int_{\R^d}\left(\bigl(\beta_i(X_s+y)-\beta_i(X_s)\bigr)\bigl(\beta_j(X_s+y)-\beta_j(X_s)\bigr)\right)\,\nu(X_s,\D y)\,\D s\\&-2\int_0^t\int_{\R^d}y_i\bigl(\beta_j(X_s+y)-\beta_j(X_s)\bigr)\,\nu(X_s,\D y)\,\D s\,,\end{align*} for $t\ge0$ and $i,j=1,\dots,d.$ Also, from \cite[Proposition II.2.17]{jacod} and \cite[Theorem 3.5]{rene-holder} we see that the second characteristic is $$ N(B,\D s)\,=\,\int_{\R^d}\Ind_{B}\left(y-\bigl(\beta(X_s+y)-\beta(X_s)\bigr)\right)\,\nu(X_s,\D y)\,\D s\,,\qquad B\in\mathcal{B}(\R^d)\,.$$

Consequently,
	 for any $x\in\R^d$,
		$$\{\varepsilon X_{\varepsilon^{-2}t}-\bar {b^*} \varepsilon^{-1}t-\varepsilon \beta(X_{\varepsilon^{-2}t})+\varepsilon \beta(X_0)\}_{t\geq0}\,,\qquad \varepsilon>0\,,$$ is
		a	$\mathbb{P}_{x}$- semimartingale (with respect to the natural filtration generated by $\process{X}$) whose (modified) characteristics (relative to $h(x)=x$) are given by
		\begin{align*}B^{\varepsilon,i}_t&\,=\,0\,,\\
		C^{\varepsilon,ij}_t&\,=\,
		\varepsilon^2\int_0^{\varepsilon^{-2}t}\sum_{k,l=1}^{d}\left(\delta_{ki}-\partial_k\beta_i(X_{s})\right)c_{kl}(X_{s})\left(\delta_{lj}-\partial_l\beta_j(X_{s})\right)\D s\,,\\
		N^{\varepsilon}(\D s,B)&\,=\,\frac{1}{\varepsilon^{2}}\int_{\R^d}\Ind_{B}\left(\varepsilon y-\varepsilon\bigl(\beta(X_{\varepsilon^{-2}s}+y)-\beta(X_{\varepsilon^{-2}s})\bigr)\right)\nu(X_{\varepsilon^{-2}s},\D y)\,\D s\,,\\
		\widetilde{C}^{\varepsilon,ij}_t&\,=\,C^{\varepsilon,ij}_t+\varepsilon^2\int_0^{\varepsilon^{-2}t}\int_{\R^d}y_iy_j\,\nu(X_{s},\D y)\,\D s\\&\ \ \ \ \ +\varepsilon^2\int_0^{\varepsilon^{-2}t}\int_{\R^d}\bigl(\beta_i(X_s+y)-\beta_i(X_s)\bigr)\\&
		\qquad\qquad\qquad\qquad\quad \bigl(\beta_j(X_s+y)-\beta_j(X_s)\bigr)\nu(X_s,\D y)\,\D s\\&\ \ \ \ \ -2\varepsilon^2\int_0^{\varepsilon^{-2}t}\int_{\R^d}y_i\bigl(\beta_j(X_s+y)-\beta_j(X_s)\bigr)\nu(X_s,\D y)\,\D s\,,
		\end{align*} $t\geq0$, $B\in\mathcal{B}(\R^{d})$, $i,j=1,\ldots,d$, (see \cite[Lemma 3.2 and Theorem 3.5]{rene-holder} and \cite[Proposition II.2.17]{jacod}).
		
		Further, observe that due to boundedness of $\beta(x)$, $\{\varepsilon X_{\varepsilon^{-2}t}-\bar {b^*} \varepsilon^{-1}t-\varepsilon \beta(X_{\varepsilon^{-2}t})+\varepsilon \beta(X_0)\}_{t\geq0}$ converges in the Skorohod space as $\varepsilon\to0$ if, and only if, $\{\varepsilon X_{\varepsilon^{-2}t}-\bar {b^*} \varepsilon^{-1}t\}_{t\geq0}$ converges, and if this is the case the limit is the same.
		Now, according to
		\cite[Theorem VIII.2.17]{jacod}, in order to prove the desired convergence it suffices to prove that
		\begin{equation*}\sup_{0\le s\le t}B^{\varepsilon,i}_s\,\xrightarrow[\varepsilon\to0]{\mathbb{P}_{x}}\, 0\,,
		\end{equation*} for all  $t\geq0$ and  $i=1,\ldots,d,$ which is trivially satisfied,
		\begin{equation}\label{ETP1}\int_0^{t}\int_{\R^{d}}g(y)N^{\varepsilon}(\D y,\D s)\,\xrightarrow[\varepsilon\to0]{\mathbb{P}_{x}}\,0\,,
		\end{equation} for all $t\geq0$ and  $g\in C_b(\R^{d})$
		vanishing in a neighborhood of the origin,
		and\begin{equation}\label{ETP2}\widetilde{C}^{\varepsilon,ij}_t\,\xrightarrow[\varepsilon\to0]{\mathbb{P}_{x}}\,t\Sigma^{ij}\,,\end{equation}
		for all  $t\geq0$ and  $i,j=1,\ldots,d$, where $\Sigma$ is given in \cref{ET1.2} and  $\xrightarrow[]{\mathbb{P}_{x}}$ stands for the convergence in probability.
		
		To prove the convergence in \cref{ETP2}, first observe that due to $\tau$-periodicity of all components we can replace $\process{X}$ by $\{\Pi(X_t)\}_{t\ge0}$, which is an ergodic Markov process (see  \Cref{SS2.1}).
		The assertion now follows as a direct consequence of the Birkhoff ergodic theorem.
		
		To prove the relation in \cref{ETP1} we proceed as follows. Fix  $g\in C_b(\R^{d})$  that vanishes on $B_\delta(0)$ for some $\delta>0$.
		Define now \begin{align*}F^{\varepsilon}(x)\,:=\,&\frac{1}{\varepsilon^{2}}\int_{\R^{d}}g\left(\varepsilon y-\varepsilon\bigl(\beta(x+y)-\beta(x)\bigr)\right)\nu(x,\D y)\\
		&-\frac{1}{\varepsilon^{2}}\int_{\mathbb{T}_\tau^d}\int_{\R^{d}}g\left(\varepsilon y-\varepsilon\bigl(\beta(z+y)-\beta(z)\bigr)\right)\nu(z,\D y)\,\pi(\D z)\,.\end{align*}
		Clearly, for any $\varepsilon>0$, $F^{\varepsilon}(x)$ is bounded, $\tau$-periodic, and  satisfies $F^{\varepsilon}(X_t)=
		F^{\varepsilon}\bigl(\Pi_{\tau}(X_t)\bigr)$ for $t\geq0$, and $$\int_{\mathbb{T}_\tau^d}F^{\varepsilon}(x)\,\pi(\D x)\,=\,0\,.$$
		Now, by the Markov property and exponential ergodicity of $\{\Pi_{\tau}(X_t)\}_{t\ge0}$, we have
		\begin{align*}&\mathbb{E}_{x}\left[\left(\int_0^{t}F^{\varepsilon}(X_{\varepsilon^{-2}s})\,\D s\right)^{2}\right]\\&
		\,=\,\mathbb{E}^{\tau}_{x_\tau}\left[\left(\int_0^{t}F^{\varepsilon}\bigl(\Pi_{\tau}(X_{\varepsilon^{-2}s})\bigr)\,\D s\right)^{2}\right] \nonumber\\&\,=\,2\int_0^{t}\int_0^{s}\mathbb{E}^{\tau}_{x_\tau}\left[F^{\varepsilon}\bigl(\Pi_{\tau}(X_{\varepsilon^{-2}s})\bigr)F^{\varepsilon}\bigl(\Pi_{\tau}(X_{\varepsilon^{-2}u})\bigr)\right]\D u\,\D s\\
		&\,=\,2\int_0^{t}\int_0^{s}\mathbb{E}^{\tau}_{x_\tau}\left[F^\varepsilon\bigl(\Pi_{\tau}(X_{\varepsilon^{-2}u})\bigr)P^{\tau}_{\varepsilon^{-2}(s-u)}F^{\varepsilon}\bigl(\Pi_{\tau}(X_{\varepsilon^{-2}u})\bigr)\right]\D u\,\D s\\
		&\,\le\, 2\Gamma\lVert F^{\varepsilon}\rVert^{2}_{\infty}\int_0^{t}\int_0^{s}\E^{-\gamma\varepsilon^{-2}(s-u)}\,\D u\,\D s\\&\le\frac{2\Gamma\varepsilon^2t}{\gamma}\lVert F^{\varepsilon}\rVert^{2}_{\infty}\\
		&\,\le\, \frac{8\Gamma\lVert g\rVert_\infty^2t}{\varepsilon^2\gamma}\sup_{x_\tau\in\mathbb{T}_\tau^d}\left\lvert\int_{\R^{d}}\Ind_{B_\delta^c}\left(\varepsilon y-\varepsilon\bigl(\beta(x+y)-\beta(x)\bigr)\right)\nu(x,\D y)\right\rvert^2\,.
		\end{align*}	
		Let $\varepsilon>0$ be such that $2\varepsilon\lVert \beta\rVert_\infty<\delta/2$. Then,
		\begin{align*}\mathbb{E}_{x}\left[\left(\int_0^{t}F^{\varepsilon}(X_{\varepsilon^{-2}s})\, \D s\right)^{2}\right]&
		\,\le\,	\frac{8\Gamma\lVert g\rVert_\infty^2t}{\varepsilon^2\gamma}\sup_{x_\tau\in\mathbb{T}_\tau^d}\left\lvert\int_{\R^{d}}\Ind_{B_{\delta/2}^c}\left(\varepsilon y\right)\nu(x,\D y)\right\rvert^2\\
		&\,=\,\frac{8\Gamma\lVert g\rVert_\infty^2t}{\varepsilon^2\gamma}\sup_{x_\tau\in\mathbb{T}_\tau^d}\left\lvert\nu(x,B_{\delta/2\varepsilon}^c)\right\rvert^2\,.
		\end{align*}	
		Now, since $$\frac{\delta^2}{4\varepsilon^2}\nu(x,B_{\delta/2\varepsilon}^c)\,\le\,\int_{B_{\delta/2\varepsilon}^c}\rvert y\rvert^2\,\nu(x,\D y)\,,$$ we have that
		\begin{align*}\mathbb{E}_{x}\left[\left(\int_0^{t}F^{\varepsilon}(X_{\varepsilon^{-2}s})\, \D s\right)^{2}\right]&
		\,\le\,	\frac{128\Gamma\lVert g\rVert_\infty^2\varepsilon^2t}{\gamma\delta^4}\sup_{x_\tau\in\mathbb{T}_\tau^d}\left(\int_{B_{\delta/2\varepsilon}^c}\rvert y\rvert^2\,\nu(x,\D y)\right)^2\,.
		\end{align*}
		Consequently, \begin{align*}&\left(\mathbb{E}_{x}\left[\left(\int_0^{t}\int_{\R^{d}}g(y)\,N^{\varepsilon}(\D y,\D s)\right)^{2}\right]\right)^{\frac{1}{2}}\\&\,\leq\,\left(\mathbb{E}_{x}\left[\left(\int_0^{t}F^{\varepsilon}(
		{X}_{\varepsilon^{-2}s})\,\D s\right)^{2}\right]\right)^{\frac{1}{2}}\\
		&\ \ \ \ \
		+\left(\mathbb{E}_{x}\left[\left(\frac{t}{\varepsilon^{2}}\int_{\mathbb{T}_\tau^d}\int_{\R^{d}}g\left(\varepsilon y-\varepsilon\bigl(\beta(z+y)-\beta(z)\bigr)\right)\,\nu(z,\D y)\,\pi(\D z)\right)^{2}\right]\right)^{\frac{1}{2}}\\&\,\leq\,
		\frac{8\sqrt{2}\Gamma^{1/2}\lVert g\rVert_\infty\varepsilon t^{1/2}}{\gamma^{1/2}\delta^2}\sup_{x_\tau\in\mathbb{T}_\tau^d}\int_{B_{\delta/2\varepsilon}^c}\rvert y\rvert^2\nu(x,\D y)\\&
		\quad \ \ +\frac{4\lVert g\rVert_{\infty}t}{\delta^{2}}\int_{\mathbb{T}_\tau^d}\int_{B_{\delta/2\varepsilon}^c}\frac{\delta^2}{4\varepsilon^2}\,\nu(z,\D y)\,\pi(\D z)\\
		&\,\le\, \left(\frac{8\sqrt{2}\Gamma^{1/2}\lVert g\rVert_\infty\varepsilon t^{1/2}}{\gamma^{1/2}\delta^2}
		+\frac{4\lVert g\rVert_{\infty}t}{\delta^{2}}\right)\sup_{x_\tau\in\mathbb{T}_\tau^d}\int_{B_{\delta/2\varepsilon}^c}\rvert y\rvert^2\,\nu(x,\D y)\,,
		\end{align*}
		which together with (\textbf{C3}) concludes the proof. \hfill\(\Box\)

\section{On Structural Properties of LTPs}\label{examples}
In this section, we
present sufficient conditions for LTPs  satisfying
strong Feller property, open-set irreducibility,
regularity
property of the semigroup, and regularity properties of the solution to the Poisson equation \cref{e:po-2}, respectively.
Several examples are also included.

\subsection*{Strong Feller Property} Let $\process{X}$ be a L\'evy-type process with symbol $q(x,\xi)$ and L\'evy triplet $(b(x),c(x),\nu(x,\D y))$.

\medskip

\begin{itemize}
	\item [(i)]

Let $\process{X}$
 be a diffusion process (that is,  $\nu(x,\D y)\equiv0$). According to \cite[Theorem  V.24.1]{rogersII}, it will be strong Feller  if $b(x)$ is  measurable,  $c(x)$  is continuous and positive definite, and
 there is a constant $\Lambda>0$ such that \begin{equation}\label{ER1.4}|c_{ij}(x)|+|b_i(x)|^2\,\le\,\Lambda(1+|x|^2)\,,\qquad x\in\R^d\,,\ i,j=1,\dots,d\,.\end{equation}

Let us also remark that
when  $\process{X}$ is a diffusion process generated with a second-order elliptic operator in divergence form \begin{equation}\label{E-DIV}
\mathcal{L}f(x)\, =\, \nabla\bigl(c(x)\cdot\nabla f(x)\bigr)
\end{equation} with
$c(x)$ bounded, measurable and uniformly elliptic, strong Feller property of  $\process{X}$ has been discussed in \cite{aron, nash, stroock}.

	\medskip
	
	\item[(ii)] Suppose that $(x,\xi)\mapsto q(x,\xi)$ is continuous,
 $b(x)$ is  continuous and bounded,  $c(x)$ is continuous, bounded and positive definite, and $x\mapsto\int_{B}(1\wedge|y|^2)\,\nu(x,\D y)$ is continuous and bounded for any $B\in\mathcal{B}(\R^d)$. Then, according to \cite[Theorems 3.23, 3.24 and 3.25]{rene-bjorn-jian} and \cite[Theorem 4.3 and its remark]{str}, $\process{X}$ is strong Feller.

	\medskip
	
	\item[(iii)] Recently, there are lots of developments on heat kernel (that is, the transition density function) estimates of Feller processes.  The reader is referred to \cite{chen-chen-wang, chen-hu-xie-zhang, chen-zhang, chen-zhang2, grz-sz, kim-lee, kim-sig-vond} and the references therein for more details. In particular, let
	$$ \mathcal{L}f(x)\,=\,\int_{\R^d}  \bigl(f(x+y)-f(x)-\langle\nabla f(x), y\rangle \Ind_{B_1(0)}(y)\bigr)\frac{ \kappa(x,y)}{|y|^{d+\alpha(x)}} \,\D y\,,$$
	where $\alpha:\R^d \to (0,2)$
	is a
	H\"older continuous
	function such that
	\begin{align*}
	& 0\,<\,\alpha_1\,\le\, \alpha(x)\, \le\, \alpha_2<2\,,  \qquad    x \in \R^d\,,\\
	& |\alpha(x)-\alpha(y)|\,\le\, c_1(|x-y|^{\beta_1}\wedge 1)\,,   \qquad    x,y\in \R^d\,,
	\end{align*}
	for  some constants $c_1>0$ and $\beta_1\in(0,1]$, and $\kappa:\R^d\times \R^d \to (0,\infty)$ is a measurable function satisfying
	\begin{align*}
	& \kappa(x,y)\,=\,\kappa(x,-y)\,, \qquad  x,y \in \R^d\,,\\
	&  0\,<\,\kappa_1\,\le\, \kappa(x,y)\,\le\, \kappa_2\,<\,\infty\,, \qquad    x,y\in \R^d\,,\\
	&  |\kappa(x,y)-\kappa(\bar x,y)|\,\le\, c_2(|x-\bar x|^{\beta_2}\wedge 1)\,,  \qquad    x,\bar x,y\in \R^d\,,
	\end{align*}
	for  some constants $c_2>0$ and $\beta_2 \in (0,1]$.  If
	$
	({\alpha_2}/{\alpha_1})-1<\bar\beta_0/{\alpha_2},
	$ with  $\bar\beta_0\in(0,\beta_0]\cap(0,{\alpha_2}/{2})$ and $\beta_0= \min\{\beta_1, \beta_2\}$, then, by \cite[Thereoms 1.1 and 1.3]{chen-chen-wang}, $(\mathcal{L}, C_c^\infty(\R^d))$ generates a LTP. Furthermore, by upper bounds as well as H\"{o}lder regularity and gradient estimates of the heat kernel (see  \cite[Thereoms 1.1 and 1.3, and Remark 1.4]{chen-chen-wang}),  this
associated  process  is strong Feller.
	
	\medskip
	
	\item[(iv)] Let $\process{X}$ and $\process{\tilde X}$ be LTPs with semigroups $\process{P}$ and $\process{\tilde P}$, and Feller generators $(\mathcal{A}^\infty,\mathcal{D}_{\mathcal{A}^\infty})$ and $(\tilde{\mathcal{A}}^\infty,\mathcal{D}_{\tilde{\mathcal{A}}^\infty})$, respectively.
	Suppose that $\process{X}$ is strong Feller. If $\mathcal{A}^\infty-\tilde {\mathcal{A}}^\infty$ is a bounded operator on $(B_b(\R^d), \lVert\cdot\rVert_\infty)$, then the formula
	$$P_tf\,=\,\tilde P_tf+\int_0^t  P_{s}(\mathcal{A}^\infty-\tilde {\mathcal{A}}^\infty)  \tilde P_{t-s}f\,\D s\,,\qquad f\in \mathcal{D}_{\mathcal{A}^\infty}\cap\mathcal{D}_{\tilde{\mathcal{A}}^\infty}\,,$$ implies that $\process{\tilde X}$ is also strong Feller. Namely, since both $\process{X}$ and $\process{\tilde X}$ are LTPs, the above relation holds for any $f\in C_c^\infty(\R^d)$. According to \cite[Lemma 1.1.1]{chung},
the boundedness of $\mathcal{A}^\infty-\tilde {\mathcal{A}}^\infty$ and
the dominated convergence theorem, it also holds for $f(x)=\Ind_{O}(x)$ for any open set $O\subseteq\R^d$. The claim now follows from Dynkin's monotone class theorem.
	The assertion above roughly asserts that a bounded  perturbation preserves the strong Feller property. Below is an example.
	
	Let
	\begin{align*}\tilde{\mathcal{L}}f(x)\,=\,&\int_{B_1(0)} \bigl(f(x+y)-f(x)-\langle\nabla f(x), y\rangle\bigr)\frac{ \kappa(x,y)}{|y|^{d+\alpha(x)}} \,\D y \\
	&+\int_{B_1^c(0)}\bigl(f(x+y)-f(x)\bigr)\,\nu(x,\D y)\,,\end{align*}
	where $\alpha(x)$ and $\kappa(x,y)$ satisfy all the assumptions in (iii), and $\nu(x,\D y)$ is such that
	
	\medskip
	
	\begin{itemize}
		\item [(a)] $\displaystyle\sup_{x\in\R^d}\nu(x,B_1(0))=0;$
		
		\medskip
		
		\item [(b)] $\displaystyle\sup_{x\in \R^d}\int_{\R^d}|y|^2\nu(x,\D y)<\infty$;
		
		\medskip
		
		\item[(c)] $\displaystyle f\mapsto \int_{\R^d}\bigl(f(\cdot+y)-f(\cdot)\bigr)\,\nu(\cdot,\D y)$ is an operator on $C_\infty(\R^d)$.
	\end{itemize}
	
	\medskip
	
	\noindent For example, one can take $\nu(x,\D y)=\frac{\gamma(x,y)}{|y|^{d+\delta}}\Ind_{B^c_1(0)}(y)\,\D y$ with $\delta>0$ and $\gamma(x,y)$ nonnegative, bounded and such that $x\mapsto \gamma(x,y)$ is continuous for almost every $y\in\R^d$.
	Further, let $\mathcal L$ be the operator  given in (iii). Then, \begin{align*}(\tilde {\mathcal L}-\mathcal L)f(x)\,=\,&\int_{B_1^c(0)}\bigl(f(x+y)-f(x)\bigr)\,\nu(x,\D y)\\
	&- \int_{B_1^c(0)}\bigl(f(x+y)-f(x)\bigr)\,\frac{ \kappa(x,y)}{|y|^{d+\alpha(x)}} \,\D y\end{align*} is bounded on $(B_b(\R^d), \lVert\cdot\rVert_\infty)$.
	By assumption, it is also bounded on   $(C_\infty(\R^d),$\linebreak$\lVert\cdot\rVert_\infty)$.
	Now, according to \cite[Lemma 1.28]{rene-bjorn-jian} and \cite[Proposition 2.1]{shioz},  $\tilde{\mathcal L}=\mathcal L+(\tilde {\mathcal L}-\mathcal L)$ generates a LTP.
	This, along with the assertion above and the strong Feller property of the process associated with  $\mathcal{L}$, yields  the strong Feller property of the process associated with  $\tilde{\mathcal{L}}$.
\end{itemize}

\medskip

We remark also  that the strong Feller property of LTPs has been discussed in \cite{strong}. In the special case when $\process{X}$ is given through \cref{SDE2},
the strong Feller property (and
the open-set irreducibility)  has been discussed in \cite{kwon} under the assumption that $\nu_Z(\R^m)<\infty$, and in \cite{masuda, masuda-err} for an arbitrary $\nu_Z(\D y)$, that is, an  arbitrary pure-jump L\'evy process $\process{Z}$. Observe that in both situations  non-degeneracy of $\Phi_2(x)\Phi'_2(x)$ has been assumed. In the case when $\Phi_3(x)\equiv\Phi_3\in\R^{d\times q}$  the problem has been considered in \cite{ari,liang-sch-wang, luo-wang}, and for non-constant (and non-degenerated) $\Phi_3(x)$ in \cite{liang-wang}.

\subsection*{Open-Set Irreducibility} Let $\process{X}$ be a L\'evy-type process with symbol $q(x,\xi)$ and L\'evy triplet $(b(x),c(x),\nu(x,\D y))$.

\medskip
	
	\begin{itemize}
		\item[(i)]
		According to \cite[Theorems V.20.1 and V.24.1]{rogersII} and \cite[Theorem 7.3.8]{durrett}, a diffusion process
		will be open-set irreducible (and strong Feller) if  $b(x)$ and $c(x)$ are locally H\"older continuous,
		$c(x)$ is positive definite,
		and \cref{ER1.4} holds true. Observe that
		\cref{ER1.4} trivially holds true in the periodic case.

		Also, when  $b(x)\in C_b^1(\R^d)$, $c(x)\in C_b^2(\R^d)$, $\partial_{ij}c_{kl}(x)$ is uniformly continuous for all $i,j,k,l=1,\dots,d$, and $c(x)$ is positive definite,
open-set irreducibility (and
 strong Feller property) of the process follows from the support theorem for diffusion processes, see  \cite[Lemma 6.1.1]{fried} and \cite[p.\ 517]{ikeda-watanabe}. For support theorem of jump processes one can refer to \cite{simon}.
		
		\medskip

		\item [(ii)] If $\process{X}$ is a diffusion process generated
 by a second-order elliptic operator in  divergence form \cref{E-DIV} with uniformly elliptic, bounded and measurable diffusion coefficient,
 open-set irreducibility (and
 strong Feller property) follows  from  the corresponding heat kernel estimates (see \cite{aron,nash,stroock}).
		
		The  diffusion processes with jumps
		or pure jump process considered in \cite{chen-chen-wang, chen-hu-xie-zhang, chen-zhang, chen-zhang2, grz-sz, kim-lee, kim-sig-vond} are also open-set irreducible, which is a direct consequence of obtained lower bounds of heat kernel.

		\medskip
		
		\item[(iii)] Let $\mathcal{L}$ and $\tilde{\mathcal{L}}$ be the operators from (iv)
 in the discussion on
 the strong Feller property.
		According to \cite[Thereom 1.3]{chen-chen-wang}, the LTP corresponding to $\mathcal{L}$  is open-set irreducible. Further, observe  that $$\sup_{x\in \R^d} \int_{B_1^c(0)} \frac{\kappa(x,y)}{|y|^{d+\alpha(x)}}\,\D y\,<\,\infty\,.$$
		Thus, by \cite[Lemma 3.1]{barlow} and \cite[Lemma 3.6]{barlow2}, the process  associated with the operator $\tilde{\mathcal{L}}$ is also open-set irreducible.
			\end{itemize}

\medskip

	For  open-set irreducibility  of LTPs of the form \cref{SDE1} we refer the reader to  \cite{ari}, \cite{kwon} and \cite{masuda,masuda-err}.

In the following proposition, which slightly generalizes \cite[Lemma 2]{horie-inuzuka-tanaka}, we show that a LTP will be open-set irreducible if the corresponding L\'evy measure shows enough jump activity. First, recall that a function $f:\R^d\to\R$ is said to be \textit{lower semi-continuous} if \begin{equation*}\liminf_{y\to x}f(y)\,\ge\, f(x)\,,\qquad x\in\R^d\,.
\end{equation*}
\begin{proposition} \label{p2.1}
	The process $\process{X}$ will be open-set irreducible if
	there are constants $R>r\ge0$ such that
	
	\medskip
	
	\begin{itemize}
		\item [(i)] $\inf_{x\in K}\nu(x,O)>0$
		for every non-empty open set $O\subseteq B_R(0)\setminus B_r(0)$, and every non-empty compact set $K\subset \R^d$;
		
		\medskip
		
		\item[(ii)] the function $x\mapsto \int_{\R^d}f(y+x)\nu(x,\D y)$ is lower semi-continuous for every non-negative lower semi-continuous function $f:\R^d\to\R.$
	\end{itemize}
\end{proposition}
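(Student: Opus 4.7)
The plan is to combine the L\'evy system of $\process{X}$ (Meyer's compensation formula for the jump measure) with a chain-of-jumps argument, using hypothesis (i) for the positivity of the jump intensity and (ii) to promote that pointwise positivity to a uniform-on-compacts lower bound. The approach follows the strategy of \cite[Lemma 2]{horie-inuzuka-tanaka} adapted to the more general setting here.

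Fix $x\in\R^d$, a non-empty open set $O\subseteq\R^d$ and $t>0$. Choose $x_0\in O$ and $\rho\in(0,(R-r)/4)$ small enough that $\overline{B_{2\rho}(x_0)}\subset O$. Since $R>r\ge 0$, one can link $x$ to $x_0$ by a finite chain of points $x=z_0,z_1,\dots,z_n=x_0$ with $|z_{k+1}-z_k|=(R+r)/2$. Setting $\tilde U_k:=B_{\rho/2}(z_k)$ and $U_k:=B_{\rho}(z_k)$, the geometry is arranged so that $U_n\subset O$, and for every $u\in\overline{\tilde U_k}$ the translate $O_k^*:=(z_{k+1}-z_k)+B_{\rho/2}(0)$ lies in $B_R(0)\setminus\overline{B_r(0)}$ and satisfies $u+O_k^*\subset U_{k+1}$. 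Hypothesis (i), applied with $K=\overline{\tilde U_k}$ and the open set $O_k^*$, then yields
\begin{equation*}\kappa_k\,:=\,\inf_{u\in\overline{\tilde U_k}}\nu(u,U_{k+1}-u)\,\ge\,\inf_{u\in\overline{\tilde U_k}}\nu(u,O_k^*)\,>\,0\,,\end{equation*}
while (ii) ensures that $u\mapsto\nu(u,U_{k+1}-u)=\int\Ind_{U_{k+1}}(u+y)\,\nu(u,\D y)$ is lower semi-continuous, since the indicator of an open set is lower semi-continuous.

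The core technical step would be to establish, for every $k$ and every $s>0$, the uniform single-step reachability
\begin{equation*}q_k(s)\,:=\,\inf_{u\in\overline{\tilde U_k}}\mathbb{P}_u(X_s\in\tilde U_{k+1})\,>\,0\,.\end{equation*}
For this I would decompose the driving Poisson measure $\tilde\mu$ in \eqref{SDE} into a ``large jump'' piece supported on $\{z:k(\cdot,z)\in B_R(0)\setminus\overline{B_r(0)}\}$ (whose $\tilde\nu$-intensity equals $\nu(X_{s-},B_R(0)\setminus\overline{B_r(0)})\ge\kappa_k$ when $X_{s-}\in\overline{\tilde U_k}$) and the complementary ``small'' part, and condition on the time $\sigma$ of the first large jump. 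By c\`adl\`ag right-continuity, $\mathbb{P}_u(\sup_{r\le \eta}|X_r-u|<\rho/2)>0$ for $\eta$ small enough, and lower semi-continuity coming from (ii) together with compactness upgrades this to a uniform bound over $u\in\overline{\tilde U_k}$; this controls both the event ``do not leave $\tilde U_k$ before $\sigma$'' and, after the jump, ``do not leave $\tilde U_{k+1}$ on $[\sigma,s]$''. Combining these with the lower bound $\kappa_k$ on the intensity of a large jump whose displacement belongs to $O_k^*$, and applying the strong Markov property at $\sigma$, produces $q_k(s)>0$.

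Once this is in hand, the Markov property at the times $kt/n$ gives
\begin{equation*}\mathbb{P}_x(X_t\in O)\,\ge\,\mathbb{P}_x\bigl(X_{kt/n}\in\tilde U_k,\ k=1,\dots,n\bigr)\,\ge\,\prod_{k=0}^{n-1}q_k(t/n)\,>\,0\,,\end{equation*}
which is precisely open-set irreducibility. The main obstacle is the middle step: one must reach $\tilde U_{k+1}$ \emph{exactly at time} $s$, not merely visit it at some earlier time. The resolution is to glue ``make a large jump into $\tilde U_{k+1}$ just before $s$'' (supplied by the L\'evy system together with (i)) to ``do not escape $\tilde U_{k+1}$ in the tiny remaining interval'' (supplied by c\`adl\`ag small-time control). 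Condition (ii) is decisive throughout: it is what converts the merely pointwise jump-rate positivity coming from (i) into the uniform-on-compacts lower bound $\kappa_k$, so that the product over $k$ in the last display does not collapse to zero.
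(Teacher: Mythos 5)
Your strategy---working with the SDE representation \cref{SDE} and a L\'evy-system / first-jump decomposition---is a genuinely different route from the paper's proof, which argues entirely at the level of the Feller generator: pick a smooth bump $f\in C_c^\infty(\R^d)$ supported in a small ball around the target point, use the uniform convergence $(P_tf-f)/t\to\mathcal{A}^\infty f$ (available because $C_c^\infty(\R^d)\subseteq\mathcal{D}_{\mathcal{A}^\infty}$), and observe that for $y$ outside $\supp f$ only the jump term $\int f(z+y)\,\nu(y,\D z)$ survives. Hypothesis (ii) is then invoked precisely where it is needed, namely to obtain lower semi-continuity of $z\mapsto\nu(z,B_{\rho-\varepsilon}(y-z))$---the target ball depends on $z$, so (i) alone does not give uniform positivity---and (i) together with compactness of $\bar B_\rho(x)$ finishes a short contradiction argument. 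That route requires no decomposition of the driving noise, no maximal-process estimate, and no glueing via the strong Markov property.

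Your sketch, as written, has two genuine gaps. First, the uniform lower bound $\inf_{u\in\overline{\tilde U_k}}\mathbb{P}_u(\sup_{r\le\eta}|X_r-u|<\rho/2)>0$ is \emph{not} a consequence of hypothesis (ii): (ii) is a regularity property of the L\'evy kernel $\nu(x,\D y)$, and it says nothing about the transition function or the running supremum of the sample path. Such a uniform confinement bound needs an independent ingredient---for example a maximal inequality derived from local boundedness of the symbol (of the type $\mathbb{P}_u(\sup_{s\le t}|X_s-u|\ge\rho)\le c\,t\sup_{|y-u|\le\rho}\sup_{|\xi|\le\rho^{-1}}|q(y,\xi)|$), or Feller-continuity of the laws $u\mapsto\mathbb{P}_u$ on Skorokhod space combined with Portmanteau. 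Without this the product $\prod_{k}q_k(t/n)$ cannot be shown to be positive and the argument does not close. Second, your first-jump decomposition is built on $\{z:k(\cdot,z)\in B_R(0)\setminus\overline{B_r(0)}\}$; the hypothesis allows $r=0$, in which case this set can carry infinite $\tilde\nu$-intensity and the ``first large jump time'' is then $0$ almost surely, degenerating the decomposition. The repair is to condition instead on the first jump that lands in a set bounded away from the origin; your own $O_k^*$ serves, since $\rho<(R-r)/4$ keeps it inside the annulus and away from $0$, and the associated intensity $\nu(X_{s-},O_k^*)$ is finite. With these two repairs a chain-of-jumps proof should be possible, but note that, as presented, the step you attribute to (ii) is doing no work at all; the paper's argument shows where (ii) is actually essential and gives the result with far lighter machinery.
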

\begin{proof}
	Let $x\in\R^d$ and $\rho>0$ be arbitrary,  and let $f\in C_c^{\infty}(\R^d)$
be such that
 $0\leq f\leq 1$ and ${\rm supp}\, f\subset B_\rho(x)$. By assumption, $$\lim_{t\to0}\left\|\frac{P_t f-f}{t}-\mathcal{A}^\infty f\right\|_\infty\,=\,0\,.$$ In particular, for any $B\subseteq  B^c_\rho(x)$,
	\begin{align*}
	\liminf_{t\to0}\inf_{y\in B}\frac{\mathbb{P}_{y}\bigl(X_t\in  B_\rho(x)\bigr)}{t}&\,\geq\, \liminf_{t\to0}\inf_{y\in B}\frac{P_tf(y)}{t}\\&\,=\,\liminf_{t\to0}\inf_{y\in B}\left|\frac{P_tf(y)}{t}-\mathcal{A}^\infty f(y)+\mathcal{A}^\infty f(y)\right|\\&\,=\,\inf_{y\in B}|\mathcal{A}^\infty f(y)|\\
	&\,=\,\inf_{y\in B}\int_{\R^d}f(z+y)\nu(y,\D  z)\,.\end{align*}
	Further, let $0<\varepsilon<\rho$ be arbitrary, and let
$0\le f_\varepsilon\in C_c^{\infty}(\R^d)$ be
such that
	$$f_\varepsilon(y)\,=\,\left\{
	\begin{array}{ll}
	1, & y\in B_{\rho-\varepsilon}(x) \\
	0, & y\in B^c_{\rho}(x)\,.
	\end{array}
	\right.$$
	Then, for any $y\in B^c_{\rho}(x)$ we have that
	\begin{align*}\mathcal{A}^\infty f_\varepsilon(y)&\,=\,\int_{\R^d}f_\varepsilon(z+y)\,\nu(y,\D z)\\&\,\geq\,\nu\bigl(y,\bigl(B_R(0)\setminus B_r(0)\bigr)\cap B_{\rho-\varepsilon}(x-y)\bigr)\,.
	\end{align*}
	
	Next, take $x,y\in\R^d$ such that $r<|x-y|<R$, and pick $\varepsilon,\rho>0$ such that $\varepsilon<\rho$ and $r+2\rho<|x-y|<R-2\rho$. Then we have
	\begin{align*}
	\liminf_{t\to0}\inf_{z\in B_\rho(x)}\frac{\mathbb{P}_{z}\bigl(X_t\in  B_\rho(y)\bigr)}{t}&\,\ge\, \inf_{z\in B_\rho(x)}\nu\bigl(z,\bigl(B_R(0)\setminus B_r(0)\bigr)\cap B_{\rho-\varepsilon}(y-z)\bigr)\\
&\,=\, \inf_{z\in B_\rho(x)}\nu\bigl(z, B_{\rho-\varepsilon}(y-z)\bigr)\,.\end{align*}
	Assume now that $\inf_{z\in B_\rho(x)}\nu\bigl(z, B_{\rho-\varepsilon}(y-z)\bigr)=0.$ Then there is a sequence $\{z_n\}_{n\in\N}\subset B_\rho(x)$ converging to $z_0\in\bar{B}_\rho(x)$ such that
	\begin{equation*}
	\liminf_{n\to\infty}\nu\bigl(z_n, B_{\rho-\varepsilon}(y-z_n)\bigr)\,=\,\liminf_{n\to\infty}\int_{\R^d}\Ind_{B_{\rho-\varepsilon}(y)}(u+z_n)\,\nu(z_n,\D u)\,=\,0\,.
	\end{equation*}
	However, since $z\mapsto \Ind_{B_{\rho-\varepsilon}(y)}(z)$ is a lower semi-continuous function, we have that
	\begin{equation*}
	\liminf_{n\to\infty}\int_{\R^d}\Ind_{B_{\rho-\varepsilon}(y)}(u+z_n)\,\nu(z_n,\D u)\,=\,\nu\bigl(z_0, B_{\rho-\varepsilon}(y-z_0)\bigr)\,>\,0\,,
	\end{equation*}
	which is in contradiction with the above assumption. Hence, there is  $t_*=
t_*(x,y,\rho,\varepsilon)>0$ such that
	\begin{equation*}
	\mathbb{P}_{z}\bigl(X_t\in B_{\rho}(y)\bigr)\,>\,0\,,\qquad z\in B_\rho(x)\,,\ t\in(0,t_*]\,.
	\end{equation*}
	
	Fix now $\varepsilon,\rho>0$ such that $\varepsilon<\rho$  and $4\rho<R-r$. From the previous discussion it follows that for any
	$x,y\in\R^d$ with  $r+2\rho<|x-y|<R-2\rho$, there is  $t_{**}=
t_{**}(x,y,\rho,\varepsilon)>0$ such that \begin{equation*}
	\mathbb{P}_{z}\bigl(X_t\in B_{\rho}(y)\bigr)\,>\,0\,,\qquad z\in B_\rho(x)\,,\ t\in(0,t_{**}]\,.
	\end{equation*}
	The assertion now follows by employing the Chapman-Kolmogorov equation.		\end{proof}

   Observe that in \Cref{p2.1} we require that $\nu(x,\D y)$ is not singular with respect to the $d$-dimensional Lebesgue measure. However, there are many  interesting open-set irreducible LTPs which do not meet this property. For example, let $\process{X}$ be a solution to \cref{SDE1} with  $n=d+1$,  $\Phi(x)=(\bar\Phi(x),\mathbb{I}_{d})$ and $Y_t=(t,B_t,Z_t)'$, $t\ge0$, where $\bar\Phi:\R^d\to\R^d$, $\mathbb{I}_{d}$ is the $d\times d$-identity matrix, $\process{B}$ is a $d_1$-dimensional Brownian motion with $1\le d_1<d$, and $\process{Z}$ is a $(d-d_1)$-dimensional rotationally invariant  $\alpha$-stable L\'evy process (independent of  $\process{B}$) with $\alpha\in(0,2)$.
Clearly, in this case
	 the L\'{e}vy measure  is $(d-d_1)$-dimensional. Thus, \Cref{p2.1}  cannot be applied to the process $\process{X}$.
		However, open-set irreducibility of $\process{X}$  may be concluded by employing the time-changed idea as in \cite{WZ}. Namely, the Girsanov transformation implies open-set irreducibility of a solution to \cref{SDE1} with  $\Phi_0(x)$ similar to $\Phi(x)$ defined  above and $\bar Y_t=(t,B_t,\bar B_t)'$, $t\ge0$, where  $\process{B}$ is also as above, and $\process{\bar B}$ is a $(d-d_1)$-dimensional Brownian motion (independent of  $\process{B}$).
	 With this at hand, and  following the approach in \cite{WZ} (the time-changed idea combined with approximation argument),  we conclude open-set irreducibility of  $\{X_t\}_{t\ge0}$.
	An alternative approach is based on the Levi's method from PDE theory. Namely, since the transition function of the process $\{(B_t,  Z_t)\}_{t\ge0}$ enjoys the product form with Gaussian estimates and two-sided heat kernel estimates for rotationally invariant $\alpha$-stable processes, one may follow the argument from \cite{chen-zhang2} to get two-sided heat kernel estimates for  $\{X_t\}_{t\ge0}$. When $\alpha\in (0,1)$ we may need to additionally assume that $\Phi(x)$ is H\"{o}lder continuous.
	
	Let us also remark that open-set irreducibility (and strong Feller property) of
	 a solution to \cref{SDE1} with $\Phi(x)=(\bar\Phi(x),\mathbb{I}_{d})$ and
	$Y_t=(t,Z^1_t,\dots,Z^{d}_t)'$, $t\ge0$,  where $\bar\Phi:\R^d\to\R^d$, and $\{Z^i_t\}_{t\ge0}$, $i=1,\dots,d,$  are mutually independent  one-dimensional symmetric  $\alpha$-stable L\'{e}vy processes with $\alpha\in(1,2)$, has  been deduced in \cite[Theorem 3.1(iv)]{aris}. Note that in this case the L\'evy measure again does not satisfy (i) in \Cref{p2.1}.

\subsection*{Regularity Property of the Semigroup, and Regularity Properties of the Solution to \cref{e:po-2}} Let $\process{X}$ be a L\'evy-type process with $\tau$-periodic   L\'evy triplet $(b(x),c(x),\nu(x,\D y))$.

\medskip

\begin{itemize}
	\item[(i)] (Diffusion processes) Let $\varepsilon\in(0,1)$, and  let $\process{X}$ be a diffusion process with  coefficients  $b\in C_b^\varepsilon(\R^d)$, $c\in C_b^{1+\varepsilon}(\R^d)$,  and $c(x)$  being also positive definite. Then,
	({\bf C4})(i) with arbitrary $t_0>0$ and $\psi(r)=r^\varepsilon$ follows from \cite[the proof of Lemma 2.3]{tomisaki}. Also,
	a straightforward adaptation of  \cite[Theorem 2.1]{tomisaki}, together with \cite[Chapter 4.8]{jacobI} and \cite[Proposition 4.2]{pang-sand}, implies  ({\bf C4})(ii) with $\varphi(r)=r^2$. Then, the conclusion of Theorem \ref{T1.1} holds.
	
	\medskip
	
	\item[(ii)] (Diffusion processes with jumps) Let $\varepsilon\in(0,1)$. Assume that $b(x)$ and $c(x)$ are as in (i), and that $\nu(x,\D y)$ satisfies
	
	\medskip
	
	\begin{itemize}
		\item [(a)] $\displaystyle\sup_{x\in\R^d}\int_{B_1(0)}|z|^{1+\varepsilon}\,\nu(x,\D z)<\infty;$
		
		\medskip
		
		\item[(b)] $\displaystyle \lim_{\epsilon\to0}\sup_{x\in\R^d}\int_{B_\epsilon(0)}|z|^{1+\varepsilon}\,\nu(x,\D z)=0;$
		
		\medskip
		
		\item [(c)] $\displaystyle \lim_{R\to\infty}\sup_{x\in\R^d}\int_{B_R^c(0)}|z|^{1+\varepsilon}\,\nu(x,\D z)=0;$
		
		\medskip
		
		\item [(d)] $\displaystyle\sup_{x,y\in\R^d}|x-y|^{-\varepsilon}\int_{\R^d}\bigl(1\wedge|z|^{1+\varepsilon}\bigr)\,|\nu(x,\D z)-\nu(y,\D z)|<\infty$.
	\end{itemize}
	
	\medskip
	
	\noindent Here, $|\mu(\D z)|$ stands for the total variation measure of a signed measure $\mu(\D z)$. Then, ({\bf C4})(ii) with $\varphi(r)=r^2$ follows again from \cite[Theorem 2.1]{tomisaki}, together with \cite[Chapter 4.8]{jacobI} and \cite[Proposition 4.2]{pang-sand}.
	
	Let us give sufficient conditions that $\process{X}$ also satisfies ({\bf C4})(i).  Denote by $\process{P}$ the  semigroup of $\process{X}$, and
	let $\process{\tilde{P}}$ be the semigroup of the diffusion process with coefficients  $b(x)$ and $c(x)$. Also, denote by $(\mathcal{A}^\infty,\mathcal{D}_{\mathcal{A}^\infty})$
	and $(\tilde{\mathcal{A}}^\infty,\mathcal{D}_{\tilde{\mathcal{A}}^\infty})$ the corresponding $C_\infty$-generators, respectively. Then,
	$$P_tf\,=\,\tilde P_tf+\int_0^t  \tilde P_{s}(\mathcal{A}^\infty- \tilde{\mathcal{A}}^\infty)   P_{t-s}f\,\D s\,,\qquad f\in \mathcal{D}_{\mathcal{A}^\infty}\cap\mathcal{D}_{\tilde{\mathcal{A}}^\infty}\,.$$ Since both processes are LTPs, the above relation holds for any $f\in C_c^\infty(\R^d)$.  Assume next that $\mathcal{A}^\infty-\tilde{\mathcal{A}}^\infty$ is a bounded operator on $(B_b(\R^d),\lVert\cdot\rVert_\infty)$. Then,  according to \cite[Lemma 1.1.1]{chung},
the boundedness of $\mathcal{A}^\infty-\tilde {\mathcal{A}}^\infty$ and the dominated convergence theorem, the above relation  holds for $f(x)=\Ind_{O}(x)$ for any open set $O\subseteq\R^d$. Thus, it also holds for any $f\in B_b(\R^d).$ Recall also that $P_tf\in C_b(\R^d)$ for every $f\in C_b(\R^d)$ and every $t\ge0$ (see \cite[Corollary 3.4]{rene-conserv}). Now, according to (i),  there is
 a measurable function $C_\varepsilon:(0,\infty)\to(0,\infty)$ such that $\int_0^tC_\varepsilon(s)\,\D s <\infty$  and $\lVert \tilde P_t f\rVert_\varepsilon\le  C_\varepsilon(t)\lVert f\rVert_\infty$ for all $t>0$ and  all $\tau$-periodic $f\in C_b(\R^d)$. Thus,
	for fixed  $\tau$-periodic $f\in C_b(\R{^d})$, $P_tf\in C_b^\varepsilon(\R^d)$ and
	\begin{equation*}
	\lVert  P_t f\rVert_\varepsilon\,\le\, \lVert \tilde P_tf\rVert_\varepsilon+\int_0^t  \lVert\tilde P_{s}(\mathcal{A}^\infty- \tilde{\mathcal{A}}^\infty)   P_{t-s}f\rVert_\varepsilon\,\D s\\
	\,\le\, \bar C_\varepsilon(t) \lVert f \rVert_\infty\,,
	\end{equation*}
	where $\bar C_\varepsilon(t)=C_\varepsilon(t)+\lVert\mathcal{A}^\infty- \tilde{\mathcal{A}}^\infty\rVert \int_0^t C_\varepsilon(s)\,\D s.$ Also,
	$$\int_0^t\bar C(s)\,\D s\,\le\,(1+t\lVert\mathcal{A}^\infty- \tilde{\mathcal{A}}^\infty\rVert)\int_0^t C(s)\,\D s\,,\qquad t>0\,,$$ where $\lVert\mathcal{A}^\infty- \tilde{\mathcal{A}}^\infty\rVert$ stands for the operator norm of $\mathcal{A}^\infty- \tilde{\mathcal{A}}^\infty$. Thus, $\process{X}$ satisfies    (\textbf{C4})(i) with $\psi(r)=r^\varepsilon$.
 Therefore, if additionally  $\int_{B_1^c(0)} y\,\nu(\cdot,\D y)\in C_b^\varepsilon(\R^d)$,  the conclusion of \Cref{T1.1} holds true.
\medskip

\item[(iii)] (Pure-jump LTPs)
In the pure jump case, sufficient conditions for  (\textbf{C4})(i)  are given in \cite[Theorem 1.1]{liang-wang2}.  Also,  when the underlying process is given as a solution to an SDE of the form \cref{SDE2}, we   refer to \cite{liang-sch-wang, liang-wang, luo-wang} and the references therein.

To construct an example satisfying (\textbf{C4})(ii), we can again employ a perturbation method.	 Let $\process{X}$ and $\process{\tilde X}$ be LTPs with semigroups $\process{P}$ and $\process{\tilde P}$, and $B_b$-generators $(\mathcal{A}^b,\mathcal{D}_{\mathcal{A}^b})$ and $(\tilde{\mathcal{A}}^b,\mathcal{D}_{\tilde{\mathcal{A}}^b})$, respectively.
Assume that $\mathcal{A}^b$ satisfies (\textbf{C4})(ii) for some H\"{o}lder exponents $\psi(r)$ and $\varphi(r)$.
Further, assume that $\mathcal{A}^b-\tilde{\mathcal{A}}^b$ is a bounded operator on $(B_b(\R^d),\lVert\cdot\rVert_\infty)$, and that $(\mathcal{A}^b-\tilde{\mathcal{A}}^b)f\in C_b(\R^d)$ for every $f\in C_b(\R^d)$.
Then,
$$\tilde P_tf\,=\, P_tf+\int_0^t  P_{s}(\mathcal{A}^b-\tilde {\mathcal{A}}^b)  \tilde P_{t-s}f\,\D s\,,\qquad f\in \mathcal{D}_{\mathcal{A}^b}\cap\mathcal{D}_{\tilde{\mathcal{A}}^b}\,.$$ Similarly as before, the above relation holds for all $f\in B_b(\R^d)$.
Thus, for any $\lambda>0$ and any $\tau$-periodic $f\in C_b(\R^d)$, $$\tilde{R}^\tau_\lambda f_\tau\,=\,R^\tau_\lambda f_\tau+R^\tau_\lambda (\mathcal{A}^b-\tilde {\mathcal{A}}^b)\tilde{R}^\tau_\lambda f_\tau\,.$$
Assume now that $\{\tilde P_t\}_{t\ge0}$ satisfies  (\textbf{C4})(i) with $\psi(r)$, and that $(\mathcal{A}^b-\tilde{\mathcal{A}}^b)f\in C^\psi_b(\R^d)$ for every $f\in C^\psi_b(\R^d)$. Then, according to the proof of  \Cref{T1.1} (a), for any $\tau$-periodic $f\in C_b(\R^d)$ with $\int_{\mathbb{T}^d_\tau}f_\tau(x)\,\pi(\D x)=0$, $\tilde{R}^\tau_\lambda f\in C^\psi(\mathbb{T}^d_\tau)$ and so $(\mathcal{A}^b-\tilde{\mathcal{A}}^b)\tilde{R}^\tau_\lambda f_\tau \in C^\psi(\mathbb{T}^d_\tau)$. Hence, for any $\tau$-periodic $f\in C_b^\psi(\R^d)$, $\tilde{R}^\tau_\lambda f_\tau\in C^{\varphi\psi}(\mathbb{T}^d_\tau),$
that is, the corresponding $\tau$-periodic extension is a  solution to \cref{e:po-2}. Finally, uniqueness follows from the fact that
any solution $u(x)$ to \cref{e:po-2}  must have the representation $\int_0^\infty e^{-\lambda t} \tilde P_t f\,\D t,$ since $u=(\lambda -\tilde {\mathcal{A}}^b)^{-1}f.$

Below we give concrete examples of  LTPs $\process{X}$ and $\process{\tilde X}$ satisfying the above assumptions.
Let $\varphi:(0,\infty)\to(0,\infty)$ be  increasing, and such that $\varphi(1)=1$
and

\medskip

\begin{itemize}
	\item [(a)] there are $0<\underline{\alpha}\le\overline{\alpha}<1$, $\underline{\kappa}\in(0,1]$ and $\overline{\kappa}\in[1,\infty)$, such that \begin{equation}
	\label{scal}
	\underline{\kappa}\,\lambda^{2\underline{\alpha}}\varphi(r)\,\le\,\varphi(\lambda r)\,\le\,\overline{\kappa}\,\lambda^{2\overline{\alpha}}\varphi(r)\,,\qquad \lambda\ge1\,,\ r\in(0,1]\,;\end{equation}
	
	\medskip
	
	\item[(b)] $\displaystyle \int_1^\infty \frac{1}{r \varphi(r)}\,\D r<\infty.$

	\end{itemize}

\medskip

\noindent Then,
by (a), $\lim_{r\to0}\varphi(r)=0$,
and so
$\varphi(r)$ is a H\"{o}lder exponent with
$[m_\varphi,M_\varphi]\subseteq[2\underline{\alpha},2\overline{\alpha}]\subset(0,2)$. Further, let $n:\R^d\setminus\{0\}\to[\underline\Gamma,\overline\Gamma]$, with $0\le\underline\Gamma\le\overline{\Gamma}<\infty$, be measurable. Then,
thanks to (a) and (b),
 $$\nu_0(\D y)\,:=\,\frac{n(y)}{\varphi(|y|)|y|^{d}}\,\D y$$
is a L\'evy measure.
Denote the L\'evy process generated by the L\'evy triplet $(0,0,\nu_0(\D y))$ by $\process{X}$.
Also, let  $(\mathcal{A}^b,\mathcal{D}_{\mathcal{A}^b})$ be the corresponding $B_b$-genera- tor. Then, $\process{X}$ satisfies (\textbf{C4})(ii) for any H\"{o}lder exponent $\psi(r)$ such that $[m_\psi,M_\psi]\subset(0,1)$ and $[m_{\varphi\psi},M_{\varphi\psi}]\cap \N=\emptyset$. Namely, since $\process{X}$ has $\tau$-periodic (actually constant) coefficients,  the corresponding projection (with respect to $\Pi_{\tau}(x)$) on $\mathbb{T}_\tau^d$ is again a strong Markov process. Moreover, according to \cite[Proposition 2.2]{jian} (see also \cite[Theorem 2.1]{kno-sci}) and \Cref{p2.1},
 it is also strong Feller and open-set irreducible so it satisfies \cref{eq:erg}. Now, for any $\lambda>0$ and any $\tau$-periodic $f\in B_b(\R^d)$, we see as before that the $\tau$-periodic extension $u_{\lambda,f}(x)$ of $R_\lambda^\tau f_\tau(x)$ solves $\lambda u_{\lambda,f}-\mathcal{A}^bu_{\lambda,f}=f$. If $f\in C_b^\psi(\R^d)$ for some  H\"{o}lder exponent $\psi(r)$ such that $[m_\psi,M_\psi]\subset(0,1)$, then, according to \cite[Proposition 2.2]{jian} and the proof of  \cite[Propositions 3.5 and 3.6]{kassmann2}, $u_{\lambda,f}\in C_b^{\varphi\psi}(\R^d)$ provided $[m_{\varphi\psi},M_{\varphi\psi}]\cap \N=\emptyset$. Let us remark  that in the proofs of \cite[Propositions 3.5 and 3.6]{kassmann2} the authors require the scaling property \cref{scal} of $\varphi(r)$ for all $r\in(0,\infty)$, and the additional assumptions that $n(y)\equiv c$ for some  $c>0$ and that $\phi(r)=\varphi(r^{-1/2})^{-1}$ is a Bernstein function, that is, $(-1)^n\phi^{(n)}(r)\le 0$ for every $n\in\N_0$. They essentially use this property in order to apply \cite[Corollary 3.2]{kassmann2} via the regularity of semigroups associated with subordinated Brownian motions. However, the statement of this corollary has been proved in \cite[Proposition 2.2]{jian} under the scaling condition in \cref{scal}.
  Finally, uniqueness follows from (a straightforward adaptation of) \cite[Proposition 3.2]{priola} (by taking $b(x)\equiv0$).
A typical example of the function $\varphi(r)$ satisfying the above assumptions is given by $\varphi(r)=r^\alpha \log^\beta(\E-1+r^{-1})$ with $\alpha\in (0,2)$ and $\beta\in \R$. According to \cite[Proposition 2.2]{jian}, there is $c>0$ such that for all $t\in (0,1]$ and $f\in B_b(\R^d)$,
$\|\nabla P_tf\|_\infty\le c(\varphi^{-1}(t))^{-1}.$ Therefore, we have that

\medskip

\begin{itemize}
	\item [(1)] if $\alpha\in (1,2)$ or $\alpha=1$ and $\beta<-\alpha$, then {\bf(C4)}(i) is satisfied and Theorem \ref{T1.1} (b)(2) holds with $\psi(r)=r^{\theta_1}\log ^{\theta_2} (1+r^{-1})$ for any $\theta_1\in (0,1)$ and $\theta_2\in \R$;
	
	\medskip
	
	\item[(2)] if $\alpha\in (0,1)$, then {\bf(C4)}(i) is satisfied with $\psi(r)=r^{\theta_1}\log ^{\theta_2} (1+r^{-1})$ for any $\theta_1\in (0,\alpha)$ and $\theta_2\in \R$, and Theorem \ref{T1.1} (b)(3) holds.
\end{itemize}
Also, as we have commented above,  $\process{X}$ satisfies (\textbf{C4})(ii) if $\theta_1$ is such that $\alpha+\theta_1\notin\N$.

	Further, let $\process{\tilde X}$ be a LTP
generated by $(0,0,\nu(x,\D y))$ with $$\nu(x,\D y)\,=\Ind_{B_1(0)}(y)\,\nu_0(\D y)  + \frac{\gamma(x,y)}{\tilde{\varphi}(|y|)|y|^{d}}\Ind_{B_1^c(0)}(y)\,\D y\,,$$ where
$\tilde{\varphi}:[1,\infty)\to(0,\infty)$ satisfies that $\int_1^\infty \frac{1}{r\tilde{\varphi}(r)}\,\D r<\infty$,  and $\gamma(x,y)$ is non-negative, bounded and such that $x\mapsto \gamma(x,y)$ is continuous for almost every $y\in\R^d$ on $B_1^c(0)$ (see (iv)
in the discussion on the strong Feller property above that this L\'evy kernel generates a LTP). Denote by
$(\mathcal{\tilde A}^b,\mathcal{D}_{\mathcal{\tilde A}^b})$ the corresponding $B_b$-generator. It is easy to see that   $\mathcal{A}^b-\tilde{\mathcal{A}}^b$ is  bounded  on $(B_b(\R^d),\lVert\cdot\rVert_\infty)$, and  $(\mathcal{A}^b-\tilde{\mathcal{A}}^b)f\in C_b(\R^d)$ for every $f\in C_b(\R^d)$.
 Furthermore, $(\mathcal{A}^b-\tilde{\mathcal{A}}^b)f\in C^\psi_b(\R^d)$ for every $f\in C^\psi_b(\R^d)$ if
 we additionally assume that
 for almost all $y\in\R^d$ on $B_1^c(0)$, $x\mapsto\gamma(x,y)$ is of class $C_b^\psi(\R^d)$. With these at hand, we can follow the argument in (ii) to check that  {\bf(C4)} is satisfied, and so the conclusion of  \cref{T1.1} holds.

\end{itemize}

\section*{Acknowledgement}
  Financial support through the \textit{Alexander-von-Humboldt Foundation} and \textit{Croatian Science Foundation} under project 8958  (for N. Sandri\'c),
   Croatian Science Foundation under project 8958 (for I. Valenti\'c),
    and the National
Natural Science Foundation of China (No.\ 11831014), the Program for Probability and Statistics: Theory and Application (No.\ IRTL1704) and the Program for Innovative Research Team in Science and Technology in Fujian Province University (IRTSTFJ) (for J. Wang)
are  gratefully acknowledged. We also thank the anonymous referees for the helpful comments that have led to significant improvements of the results in the article.

\def\cprime{$'$} \def\cprime{$'$} \def\cprime{$'$}

\end{document}